\DeclareMathOperator{\fix}{Fix}
\DeclareMathOperator{\expand}{Exp}
\DeclareMathOperator{\diam}{diam}
\theoremstyle{plain}
\newtheorem{lemma}{Lemma}
\newtheorem{theorem}{Theorem}
\newtheorem{cor}{Corollary}
\numberwithin{theorem}{section}
\numberwithin{cor}{section}
\numberwithin{lemma}{section}
\theoremstyle{definition}
\newtheorem{definition}{Definition}
\theoremstyle{remark}
\newtheorem{remark}{Remark}
\newtheorem*{acknowledgements}{Acknowledgments}
\title{Finite Data Rigidity for One-Dimensional Expanding Maps}
\author{Thomas Aloysius O'Hare}
\begin{document}
\maketitle
\begin{abstract}
    \noindent Let $f,g$ be $C^2$ expanding maps on the circle which are topologically conjugate. We assume that the derivatives of $f$ and $g$ at corresponding periodic points coincide for some large period $N$. We show that $f$ and $g$ are ``approximately smoothly conjugate." Namely, we construct a $C^2$ conjugacy $h_N$ such that $h_N$ is exponentially close to $h$ in the $C^0$ topology, and $f_N:=h_N^{-1}gh_N$ is exponentially close to $f$ in the $C^1$ topology. Our main tool is a uniform effective version of Bowen's equidistribution of weighted periodic orbits to the equilibrium state.

\end{abstract}
\section{Introduction}
A $C^1$ map $f:S^1\rightarrow S^1$ is called expanding if $\min_{x\in S^1}|f'(x)|\geq\lambda_f>1$. We call $\lambda_f$ the minimum expansion rate. Let $\expand^r(S^1)$ ($r\geq 1$) be the subspace of all $C^r$ uniformly expanding maps, and for $\gamma>1$ let $\expand^r_\gamma(S^1)$ be the space of all $C^r$ expanding maps whose minimum expansion rate is greater than or equal to $\gamma$.
Given any continuous map
$f:S^1\rightarrow S^1$, recall that the degree of $f$ is defined to be value $F(x+1)-F(x)$, where $F:\mathbb{R}\rightarrow\mathbb{R}$ is any lift of $f$. In addition to being well-defined independent of the choice of lift and the point $x\in S^1$, it was proved by Shub in \cite{Shub_1969} that the degree is a complete topological conjugacy invariant for expanding maps on the circle:
\begin{theorem}
Let $f,g:S^1\rightarrow S^1$ be continuous expanding maps. Then there exists a homeomorphism $h:S^1\rightarrow S^1$ such that $h\circ f=g\circ h$ if and only if $\deg(f)=\deg(g)$.
\end{theorem}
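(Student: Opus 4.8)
The plan is to prove the two implications separately; the forward direction is soft, while the reverse contains the real content.

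\emph{The easy direction.} Suppose $h\circ f=g\circ h$ for a homeomorphism $h$ of $S^1$. Since the degree is multiplicative under composition of continuous self-maps of $S^1$ and $\deg h=\pm1$, taking degrees in $h\circ f=g\circ h$ gives $(\deg h)(\deg f)=(\deg g)(\deg h)$, hence $\deg f=\deg g$. (Equivalently, pass to lifts $H,F,G$ normalized so that $H\circ F=G\circ H$, and compare the two sides of $H(F(x+1))=G(H(x+1))$ using $F(x+1)=F(x)+\deg f$ and $G(y\pm1)=G(y)\pm\deg g$.)

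\emph{Reduction to the linear model.} For the converse, write $d:=\deg f=\deg g$. First note that an expanding map on $S^1$ has $|d|\geq2$: a lift $F$ satisfies $|F'|\geq\lambda_f>1$, so $F'$ has constant sign and $|d|=|F(x+1)-F(x)|=\big|\int_x^{x+1}F'\big|\geq\lambda_f>1$. It therefore suffices to show that every $C^1$ expanding map of degree $d$ is topologically conjugate to the linear model $E_d\colon x\mapsto dx\bmod1$; granting this, $f\sim E_d\sim g$ and we are done. (Note this also shows that in general the conjugacy is only a homeomorphism, which is precisely what makes the quantitative refinements in the rest of the paper meaningful.)

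\emph{Constructing a semiconjugacy.} Fix a lift $F$ of $f$ with $F(x+1)=F(x)+d$ and define $H_n\colon\mathbb{R}\to\mathbb{R}$ by $H_0=\id$ and $H_{n+1}=\tfrac1d\,H_n\circ F$. An easy induction shows $H_n(x+1)=H_n(x)+1$ for all $n$, so each difference $H_n-H_{n-1}$ is $\mathbb{Z}$-periodic; since precomposition with $F$ does not increase the sup-norm of a periodic function, $\|H_{n+1}-H_n\|_\infty\leq\frac1{|d|}\|H_n-H_{n-1}\|_\infty$. As $H_1-H_0=\tfrac1d F-\id$ is periodic, hence bounded, $(H_n)$ converges uniformly to a continuous $H$ satisfying $H(x+1)=H(x)+1$ and $H\circ F=d\cdot H$. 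Thus the induced degree-one circle map $h$ is a continuous semiconjugacy: $h\circ f=E_d\circ h$.

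\emph{Upgrading to a conjugacy (the main obstacle).} The crux — and the only place where uniform expansion is genuinely used, since a topological semiconjugacy need not be injective — is to show $H$ is injective. Suppose $H(x)=H(y)$ with $x\neq y$. Iterating the semiconjugacy relation gives $H\circ F^n=d^n\cdot H$, so writing $H=\id+\psi$ with $\psi$ periodic and bounded, $F^n x-F^n y=d^n(H(x)-H(y))-(\psi(F^n x)-\psi(F^n y))$ has absolute value at most $2\|\psi\|_\infty$ for every $n$. On the other hand $F\colon\mathbb{R}\to\mathbb{R}$ is a homeomorphism with $|F'|\geq\lambda_f$, so $F^n$ is monotone with $|(F^n)'|\geq\lambda_f^{\,n}$ and therefore $|F^n x-F^n y|\geq\lambda_f^{\,n}|x-y|\to\infty$, a contradiction. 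Hence $H$ is continuous and injective, so it is a strictly increasing homeomorphism of $\mathbb{R}$ commuting with $x\mapsto x+1$, and it descends to a homeomorphism $h$ of $S^1$ with $h\circ f=E_d\circ h$. Applying the same construction to $g$ and composing the resulting conjugacies yields a homeomorphism conjugating $f$ to $g$. I expect the injectivity argument to be the heart of the matter, with the convergence of $(H_n)$ being routine.
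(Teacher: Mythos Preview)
The paper does not actually prove this theorem: it is stated as Theorem~1.1 with a citation to Shub~\cite{Shub_1969} and used as background. So there is no in-paper proof to compare against.

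Your argument is correct and is essentially Shub's classical one. The forward direction via multiplicativity of degree is fine. For the converse, your fixed-point construction $H_{n+1}=\tfrac1d H_n\circ F$ is exactly the standard contraction on the affine space $\{\id+\psi:\psi\text{ continuous, }\mathbb{Z}\text{-periodic}\}$; the induction verifying $H_n(x+1)=H_n(x)+1$ and the contraction estimate $\|H_{n+1}-H_n\|_\infty\le|d|^{-1}\|H_n-H_{n-1}\|_\infty$ are both correct, and the limiting relation $d\cdot H=H\circ F$ follows by uniform convergence. The injectivity step is also sound: writing $H=\id+\psi$ with $\psi$ periodic, the identity $H\circ F^n=d^nH$ forces $|F^nx-F^ny|\le 2\|\psi\|_\infty$ whenever $H(x)=H(y)$, which contradicts the mean-value estimate $|F^nx-F^ny|\ge\lambda_f^{\,n}|x-y|$ coming from $|F'|\ge\lambda_f>1$ (this is the one place where the $C^1$ expansion hypothesis is genuinely used, consistent with the paper's definition of ``expanding''). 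Finally, a continuous injective $H$ with $H(x+1)=H(x)+1$ is automatically a strictly monotone surjection of $\mathbb{R}$, hence descends to a circle homeomorphism. Composing the conjugacies $f\sim E_d$ and $g\sim E_d$ finishes the argument.

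One minor remark: you might note explicitly that the semiconjugacy $h$ you build has degree one (this is immediate from $H(x+1)=H(x)+1$), so that the composed conjugacy between $f$ and $g$ is indeed orientation-preserving; but this is cosmetic.
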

It is easy to check that $h$ is H\"older continuous, but there is an obstruction to $h$ having higher regularity at the periodic orbits of $f$. By formally differentiating the conjugacy equation, it is clear that $h$ will not be differentiable if $(f^n)'(x)\neq (g^n)'(h(x))$ for at least one periodic point $x\in\fix(f^n)$. As the next theorem shows, satisfying this obstruction at all periodic points is sufficient to conclude differentiability of $h$:

\begin{theorem}
Suppose $f,g:S^1\rightarrow S^1$ are $C^{1+\alpha}$ ($\alpha>0$) expanding maps of the same degree, and fix a conjugacy $h$ such that $h\circ f=g\circ h$. Then the map $h$ is $C^{1+\alpha}$ if and only if for every point $p\in \fix(f^n)$, $n\in\mathbb{N}$, we have $(f^n)'(p)=(g^n)'(h(p))$.
\end{theorem}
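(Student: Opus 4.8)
The plan is to treat the two implications separately; the forward one is elementary, while the reverse one rests on thermodynamic formalism for $C^{1+\alpha}$ expanding maps. For the ``only if'' direction, assume $h\in C^1$. I would first observe that $h'$ vanishes nowhere: differentiating $h\circ f=g\circ h$ gives $h'(f(x))\,f'(x)=g'(h(x))\,h'(x)$, so (as $f',g'$ never vanish) the closed set $Z=\{x\in S^1:h'(x)=0\}$ satisfies $f^{-1}(Z)=Z$, and since backward orbits under an expanding map are dense, $Z$ is either empty or all of $S^1$; but $Z=S^1$ forces $h$ to be constant, which is absurd. With $h'\neq 0$ in hand, iterating the chain rule gives $h'(f^n(x))\,(f^n)'(x)=(g^n)'(h(x))\,h'(x)$, and evaluating at $p\in\fix(f^n)$ and cancelling $h'(p)$ yields $(f^n)'(p)=(g^n)'(h(p))$.

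For the ``if'' direction I would pass to logarithmic derivatives. After replacing $f,g$ by $f^2,g^2$ if necessary (which only weakens the hypothesis and does not change the map $h$), assume all maps are orientation preserving. Put $\phi_f=\log f'$ and $\phi_g=\log g'$, both of class $C^\alpha$, and let $\psi=\phi_f-\phi_g\circ h$, a continuous function on $S^1$. Taking logarithms in $(f^n)'(p)=(g^n)'(h(p))$ and using $h(f^ip)=g^i(h(p))$ shows that $\sum_{i=0}^{n-1}\psi(f^ip)=0$ for every $p\in\fix(f^n)$, i.e.\ $\psi$ has vanishing sums over all periodic orbits of $f$. Hence --- by the Liv\v{s}ic theorem for expanding maps, or more simply because $f$-periodic orbit measures are weak-$*$ dense among the $f$-invariant probability measures and $\psi$ is continuous --- $\int\psi\,d\nu=0$ for \emph{every} $f$-invariant probability measure $\nu$. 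In particular, writing $\mu_f,\mu_g$ for the (unique) absolutely continuous invariant probability measures of $f$ and $g$, we obtain $\int\phi_g\circ h\,d\mu_f=\int\phi_f\,d\mu_f$.

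The key step is then to identify the pushforward $h_*\mu_f$. Since $h$ is a topological conjugacy, $h_*\mu_f$ is $g$-invariant with the same measure-theoretic entropy as $\mu_f$, and $\int\phi_g\,d(h_*\mu_f)=\int\phi_g\circ h\,d\mu_f=\int\phi_f\,d\mu_f$; recalling that $\mu_f$ is the unique equilibrium state of $-\log f'$, of zero topological pressure, and likewise $\mu_g$ of $-\log g'$, it follows that $h_*\mu_f$ attains the pressure of $-\log g'$, whence $h_*\mu_f=\mu_g$. Writing $\rho_f,\rho_g>0$ for the densities and $R_f(x)=\int_0^x\rho_f$, $R_g(x)=\int_0^x\rho_g$, the identity $h_*\mu_f=\mu_g$ reads, on the universal cover, $R_g\circ h=R_f+c_0$ for a constant $c_0$, i.e.\ $h=R_g^{-1}\circ(R_f+c_0)$. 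Since for a $C^{1+\alpha}$ expanding map the invariant density is $C^\alpha$ and bounded away from $0$, the maps $R_f,R_g$ are $C^{1+\alpha}$ diffeomorphisms with non-vanishing derivative, so $R_g^{-1}\in C^{1+\alpha}$; and since post-composing a $C^\alpha$ function with a $C^1$ map preserves the $C^\alpha$ class, $h=R_g^{-1}\circ(R_f+c_0)\in C^{1+\alpha}$.

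The step I expect to be the real obstacle is obtaining the \emph{sharp} exponent $\alpha$. The naive attempt --- solving $\psi=u\circ f-u$ by Liv\v{s}ic and reading off $\log h'=-u+\mathrm{const}$ --- only gives $h\in C^{1+\alpha\beta}$, where $\beta<1$ is the H\"older exponent of the conjugacy, because $\phi_g\circ h$ is a priori merely $C^{\alpha\beta}$ and Liv\v{s}ic does not improve on the regularity of the cocycle. Routing $h$ instead through the two genuinely $C^{1+\alpha}$ objects $R_f$ and $R_g$ circumvents this and reduces the whole matter to the classical fact (Krzy\.zewski--Szlenk, Ruelle) that the invariant density of a $C^{1+\alpha}$ expanding map is $C^\alpha$, which is the one external input I would quote. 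The remaining ingredients --- weak-$*$ density of periodic orbit measures, uniqueness and vanishing pressure of the equilibrium state of $-\log f'$, positivity of the density, and the orientation normalization --- are all standard for $C^{1+\alpha}$ expanding maps of the circle.
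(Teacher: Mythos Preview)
Your argument is correct, and its skeleton matches the paper's sketch: both reduce the reverse implication to the identity $h_*\mu_f=\mu_g$ for the absolutely continuous invariant measures and then read off $h=I_g^{-1}\circ I_f$ (your $R_g^{-1}\circ(R_f+c_0)$) as a composition of $C^{1+\alpha}$ diffeomorphisms, using that the invariant densities are $C^\alpha$ and bounded away from zero. The difference lies in how $h_*\mu_f=\mu_g$ is obtained. The paper invokes Bowen's equidistribution theorem directly: matching periodic data means the weighted discrete measures $\mu_f^N$ and $h^*\mu_g^N$ coincide for every $N$, and passing to the weak-$*$ limit gives $\mu_f=h^*\mu_g$ at once. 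You instead show that $\psi=\log f'-\log g'\circ h$ has vanishing periodic sums, deduce $\int\psi\,d\nu=0$ for all $f$-invariant $\nu$ via weak-$*$ density of periodic orbit measures, and then combine entropy preservation under conjugacy with the variational characterization of $\mu_g$ to identify $h_*\mu_f$. Your route avoids quoting Bowen's theorem and makes explicit why routing through the densities recovers the sharp exponent $\alpha$ where a direct Liv\v{s}ic argument on $\psi$ would not; the paper's route is shorter and dovetails with the effective equidistribution it needs for Theorem~1.3. You also supply the easy forward direction, which the paper only alludes to.
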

While not directly stated in this form, a proof of Theorem 1.2 can be found in \cite{delaLlave}.
A closely related result is the theorem of Shub and Sullivan \cite{shub_sullivan_1985}, which proves that $C^r$ ($r\geq 2$) expanding maps of the circle which are conjugated by an absolutely continuous homeomorphism $h_1$ are in fact conjugated by a $C^r$ diffeomorphism $h_2,$ but it may be that $h_1\neq h_2$.
Martens and de Melo prove a more general version of Theorem 1.2 (\cite{Martens-Marco} Corollary 2.9) applying to all $C^r$ Markov maps in one-dimension. In particular, the main theorem of \cite{Martens-Marco} applies to unimodal maps (with critical points) without wild and solenoidal attractors and establishes periodic data rigidity for these systems. The picture for general unimodal maps is more complicated: Moreira and Smania proved \cite{Moreira-Smania} that unimodal maps with Cantor set attractor, such as Feigenbaum maps and Fibonacci maps with high order at the critical point, are always absolutely continuously conjugated, but not necessarily smoothly. Thus Shub and Sullivan's theorem does not carry over to these maps. It would be interesting to study finite data rigidity (Theorem 1.3 below) for unimodal and Markov maps as well.

The goal of the present paper is to relax the conditions of Theorem 1.2 to hold at only finitely many periodic orbits and then see how close $f$ and $g$ are to being smoothly conjugated:
\begin{theorem}
Let $g:S^1\rightarrow S^1$ be a $C^2$-expanding map.Then for every $C_0>0$ and every $\gamma>1$, there exist constants $K>0$ and $0<\lambda<1$ such that the following holds: If $f\in\expand^2_\gamma(S^1)$ is conjugated to $g$ by a homeomorphism $h$ ($h\circ f=g\circ h$), $d_{C^2}(f,g)< C_0$, and if there exists $N\in\mathbb{N}$ such that $(f^n)'(p)=(g^n)'(h(p))$ for every $p\in\fix(f^N)$, then there exists a diffeomorphism ${h}_N\in C^2(S^1)$ such that $d_{C^0}(h,{h}_N)\leq K\lambda^N$. Moreover, for every $0<\lambda^{1/2}<\lambda_0<1$ there exists a constant $K'>0$, such that if we let 
$\overline{f}_N={h}_N^{-1}\circ g\circ{h}_N$, then
$d_{C^1}(f,{f}_N)\leq K'\lambda_0^N$.

%$g:S^1\rightarrow S^1$ be a $C^2$-expanding map and let $\mathcal{W}$ be a bounded $C^2$ open neighborhood of $g$ in $\expand^2_\gamma(S^1)$. Then there exists constants $K,K'>0$ and $0<\lambda<1$ depending only on $\mathcal{W}$ such that the following holds: If $f\in\mathcal{W}$ is conjugate to $g$ by a homeomorphism $h$ ($h\circ f=g\circ h$), and there exists $N\in\mathbb{N}$ such that $(f^n)'(p)=(g^n)'(h(p))$ for every $p\in\fix(f^N)$, then there exists $\overline{h}_N\in C^2(S^1)$ such that if we let 
%$\overline{f}_N=\overline{h}_N^{-1}\circ g\circ\overline{h}_N$, then
%$d_{C^0}(h,\overline{h}_N)\leq K\lambda^N$ and $d_{C^1}(f,\overline{f}_N)\leq K'\lambda^{N/2}$.
\end{theorem}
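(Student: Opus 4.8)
The plan is to build $h_N$ by hand as the primitive of the exponential of an approximate solution of a cohomological (Liv\v sic) equation, so that it is a genuine $C^2$ diffeomorphism and $f_N:=h_N^{-1}gh_N$ is automatically a $C^2$ map conjugate to $g$; everything then reduces to two quantitative estimates, driven by the uniform effective equidistribution of weighted periodic orbits. Put $\psi:=\log|f'|-\log|g'|\circ h$. Since $f$ ranges over the class $\{f\in\expand^2_\gamma(S^1):d_{C^2}(f,g)<C_0\}$, the conjugacy $h$ is H\"older with exponent $\alpha>0$ and constant depending only on $C_0,\gamma,g$, and $\log|g'|$ is $C^1$, so $\psi\in C^\alpha$ with uniformly bounded norm; the hypothesis $(f^n)'(p)=(g^n)'(h(p))$ for all $p\in\fix(f^N)$ says precisely that the Birkhoff sums $\sum_{i=0}^{n-1}\psi(f^ip)$ vanish over every $f$-periodic orbit of period dividing $N$.

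\textbf{Step 1 (effective Liv\v sic).} Using the uniform effective version of Bowen's equidistribution theorem, I would first show that $\int\psi\,d\nu=O(\theta^N)$, where $\nu$ is the equilibrium state of $-\log|g'|\circ h$ for $f$ (equivalently $\nu=(h^{-1})_*\mu_g$) and $\theta\in(0,1)$ is uniform over the class: the Gibbs weight of an $f$-periodic point $p$ for this potential is $|(g^N)'(h(p))|^{-1}$, which by the matching of periodic data equals $|(f^N)'(p)|^{-1}$, so the weighted periodic-orbit average of $\psi$ is identically $0$ (the summands $S_N\psi(p)$ vanish), and effective equidistribution transfers this to $\nu$. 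Subtracting this exponentially small mean and inverting $u\mapsto u-u\circ f$ via the spectral gap of the Ruelle transfer operator on $C^\alpha$ then produces a H\"older $w_N$ with $\|w_N\|_{C^\alpha}$ uniformly bounded and $\|\psi-(w_N-w_N\circ f)\|_{C^0}\le C\theta^N$. Mollifying at a scale $\varepsilon_N$ gives $u_N\in C^\infty$ with $\|u_N-w_N\|_{C^0}\lesssim\varepsilon_N^{\alpha}$ and $\|u_N\|_{C^1}\lesssim\varepsilon_N^{\alpha-1}$, hence $\psi=u_N-u_N\circ f+r_N$ with $\|r_N\|_{C^0}\le C(\theta^N+\varepsilon_N^{\alpha})$.

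\textbf{Step 2 (the objects and the two estimates).} Normalize $\int_{S^1}e^{u_N}=1$, fix a reference point $x_0$ with $h_N(x_0)=h(x_0)$, and set $h_N(x):=\int_{x_0}^{x}e^{u_N}$, a $C^2$ circle diffeomorphism in the same degree class as $h$, and $f_N:=h_N^{-1}\circ g\circ h_N$, which is $C^2$, conjugate to $g$, and hence expanding once it is shown to be $C^1$-close to $f$. Differentiating $h_N\circ f_N=g\circ h_N$ gives $\log|f_N'|=\log|g'|\circ h_N+u_N-u_N\circ f_N$, to be compared with $\log|f'|=\log|g'|\circ h+\psi$; subtracting,
\[
\log|f_N'|-\log|f'|=\bigl(\log|g'|\circ h_N-\log|g'|\circ h\bigr)+\bigl(u_N-u_N\circ f_N-\psi\bigr),
\]
where the second bracket equals $-r_N+(u_N\circ f-u_N\circ f_N)$, bounded by $C(\theta^N+\varepsilon_N^{\alpha})+\|u_N\|_{C^1}\,\|f-f_N\|_{C^0}$, and $\|f-f_N\|_{C^0}\lesssim\|h-h_N\|_{C^0}^{\alpha'}$ since $h,h^{-1},h_N,h_N^{-1}$ are uniformly H\"older; the first term is $O(\|h-h_N\|_{C^0})$. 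The key point — where effective equidistribution is used a second time — is the \emph{independent} bound $d_{C^0}(h,h_N)\le K\lambda^N$: both $h$ and $h_N$ are exponentially close to the homeomorphism reconstructed from the finite period-$N$ weighted data (for $h$ this is the effective equidistribution of the weighted orbits to $\nu$ in the Kolmogorov metric, passed through the bi-Lipschitz distribution function of $\mu_g$; for $h_N$ it is because $u_N$ was manufactured from exactly that data), and the matching of periodic data makes the two reconstructions agree up to $O(\theta^N)$. Feeding $d_{C^0}(h,h_N)\le K\lambda^N$ back into the displayed identity and choosing $\varepsilon_N$ to balance the blow-up $\|u_N\|_{C^1}\lesssim\varepsilon_N^{\alpha-1}$ against the exponential gains $\lambda^{\alpha'N}$, $\theta^N$, $\varepsilon_N^\alpha$ yields $\|\log|f_N'|-\log|f'|\|_{C^0}\le K'\lambda_0^N$, hence $d_{C^1}(f,f_N)\le K'\lambda_0^N$, for any prescribed $\lambda_0$ above the threshold $\lambda^{1/2}$ that this balance forces.

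\textbf{Main difficulty.} The crux is the uniform effective equidistribution of weighted periodic orbits with an exponential rate independent of $f$ in the class (the paper's main tool), together with using it twice: once to extract the exponentially accurate coboundary $w_N$, and once to obtain the clean bound $d_{C^0}(h,h_N)\le K\lambda^N$ \emph{independently} of the $C^1$ estimate, thereby breaking the apparent circularity between the two estimates (a naive bound $d_{C^0}(h,h_N)\lesssim d_{C^1}(f,f_N)^{\alpha}$ is too weak to close). The remaining work — the H\"older-modulus estimates for $h,h^{-1}$, the transfer-operator inversion, and the smoothing trade-off producing the $\lambda^{1/2}$ loss — is technical but essentially routine.
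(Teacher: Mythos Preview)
Your approach is genuinely different from the paper's and contains a real gap in Step~1. You claim that after subtracting the small mean $c_N$ you can ``invert $u\mapsto u-u\circ f$ via the spectral gap of the Ruelle transfer operator on $C^\alpha$'' to produce $w_N$ with $\|\psi-(w_N-w_N\circ f)\|_{C^0}\le C\theta^N$. But the spectral gap of the (normalized) transfer operator $\mathcal L$ lets you solve $\phi=u-\mathcal L u$ on the zero-mean subspace, \emph{not} the coboundary equation $\phi=u-u\circ f$; these are different because $\mathcal L$ is only a one-sided inverse of $u\mapsto u\circ f$. The coboundary equation for a H\"older $\phi$ is solvable if and only if \emph{all} periodic Birkhoff sums of $\phi$ vanish (Liv\v sic), and merely having $\int\phi\,d\mu_f$ small does not make $\phi$ $C^0$-close to a coboundary (e.g.\ $\phi=\cos 2\pi x$ for the doubling map). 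What you would actually need is an \emph{effective Liv\v sic theorem} --- from $S_N\psi\equiv 0$ on $\fix(f^N)$ deduce $\psi$ is $C^0$-close to a coboundary --- which is a separate nontrivial statement, not a consequence of the spectral gap. There is a second, related gap in Step~2: even granting an approximate Liv\v sic solution $w_N$, your $h_N(x)=\int^x e^{u_N}$ is not shown to be $C^0$-close to $h$. Your justification (``$u_N$ was manufactured from exactly that data'') is not an argument: $u_N$ comes from a cohomological equation, not directly from the period-$N$ orbit measure, and since $h$ is only H\"older there is no derivative of $h$ for $e^{u_N}$ to approximate.

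The paper sidesteps both issues by never invoking Liv\v sic. It takes $h_N:=I_g^{-1}\circ I_f$, where $I_f(x)=\int_0^x\rho_f$, $I_g(x)=\int_0^x\rho_g$ are the distribution functions of the SRB measures; this is automatically a $C^{1+\alpha}$ diffeomorphism with uniformly controlled derivative, regardless of any periodic data. The identity $I_g(h(x))=h^*\mu_g([0,x])$ together with the matching $\mu_f^N=h^*\mu_g^N$ reduces $|h_N(x)-h(x)|\le C|I_f(x)-I_g(h(x))|$ directly to $|\mu_f([0,x])-\mu_f^N([0,x])|+|\mu_g([0,h(x)])-\mu_g^N([0,h(x)])|$, and a single application of effective equidistribution (tested against piecewise-linear approximations of $\chi_{[0,x]}$ with slope $\tau^{-N/2}$) gives the bound $K\tau^{N/2}$. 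The $C^1$ estimate on $f-f_N$ then follows not from your cohomological computation but from a simple interpolation lemma: once $|F|_{C^0}\lesssim\lambda^N$ and $|F'|_{C^\alpha}$ is uniformly bounded, one gets $|F|_{C^1}\lesssim\lambda^{\alpha N/2}$ by bounding the difference quotient at scale $\lambda^{N/2}$. This is where the threshold $\lambda^{1/2}$ comes from, with no mollification needed.
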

A key step in the proof of Theorem 1.3 is to prove an effective version of Bowen's equidistribution theorem (see Theorem 2.1 below), which allows us to estimate the difference between $h$ and ${h}_N$ in terms of the periodic orbits of order $N$. The convergence rate $\lambda$ comes from the effective equidistribution rate and depends on the degree of the expanding maps and $\gamma$. The proof of effective equidistribution is postponed until Section 3 and relies on the technique of Birkhoff cones for subshifts of finite type, which we recall in the appendix. This technique is well-known in the case of expanding maps; see for instance \cite{Baladi}.

\begin{acknowledgements}
    The author would like to express his sincerest thanks to Andrey Gogolev for suggesting the problem, and for his patience and invaluable guidance. The author would also like to thank James Marshall Reber for many useful discussions and for his help in proving Theorem 3.1. Thanks to Daniel Smania for providing valuable comments on the draft of this paper.
\end{acknowledgements}

\section{Finite Data Rigidity}
The goal of this section is to generalize Theorem 1.2 to allow for the derivatives of $f$ and $g$ to agree only at finitely many periodic points. Of course, $f$ and $g$ will not be $C^1$ conjugate, but we can find a map $C^1$ close to $f$ which is $C^1$ conjugate to $g$. Moreover, this new map converges exponentially to $f$ as the number of periodic points that the derivatives agree on increases.
\begin{definition}
For a function $f\in C^k(S^1)$, $k\in\mathbb{N}$, let $|f|_{C^k}=\sup|D^kf|$ denote the $C^k$ seminorm. Let 
$||f||_{\infty}$ denote the supremum norm of $f$ and let
$||f||_{C^k}=||f||_{\infty}+\sum_{i=1}^k|f|_{C^i}$ denote the $C^k$ norm. 
If $f:S^1\rightarrow S^1$ is Lipschitz continuous, define
$$|f|_{Lip}=\sup_{x\neq y}\frac{|f(x)-f(y)|}{|x-y|} $$
to be the Lipschitz seminorm, and define $||f||_{Lip}=||f||_{\infty}+|f|_{Lip}$ to be the Lipschitz norm.
\end{definition}
Let us recall the following theorem of Bowen \cite{Bowen_1974}:
\begin{theorem}[Bowen]
Let $(X,d)$ be a compact metric space, $f:X\rightarrow X$ an expansive homeomorphism with the specification property, and $\psi\in C^f(X)$. Then there exists a unique equilibrium state $\mu_{\psi}\in\mathcal{M}(f)$ given by
$$\mu_{\psi}=\lim_{n\rightarrow\infty}\frac{1}{Z_n(\psi)}\sum_{x\in\fix(f^n)}\exp(S_n\psi(x))\delta_x,$$
where
$$
Z_n(\psi)=\sum_{x\in\fix(f^n)}\exp(S_n\psi(x))$$
is a normalization constant and the limit converges in the weak$^*$-topology.
\end{theorem}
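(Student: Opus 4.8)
The plan is to deduce the statement from two facts: (i) the system $(X,f,\psi)$ admits a unique equilibrium state $\mu_\psi$, and (ii) every weak$^*$ limit point of the weighted periodic-orbit measures
$$\nu_n:=\frac{1}{Z_n(\psi)}\sum_{x\in\fix(f^n)}\exp(S_n\psi(x))\,\delta_x$$
is an equilibrium state. Since $\mathcal M(f)$ is weak$^*$-compact, $(\nu_n)_n$ has limit points, so (i) and (ii) together force every limit point to equal $\mu_\psi$, whence the whole sequence converges. Existence of an equilibrium state in (i) is standard: the variational principle gives $P(\psi)=\sup_{\nu\in\mathcal M(f)}\big(h_\nu(f)+\int\psi\,d\nu\big)$, and since $f$ is expansive the entropy map $\nu\mapsto h_\nu(f)$ is upper semicontinuous, so the supremum is attained. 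Uniqueness will fall out of the Gibbs estimates proved along the way.

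First I would identify the exponential growth rate of the partition function. Observe that $\nu_n$ is already exactly $f$-invariant: $x\mapsto f x$ permutes $\fix(f^n)$ and $S_n\psi(fx)=S_n\psi(x)$ (the Birkhoff sum over a full period is cyclically invariant), so $f_*\nu_n=\nu_n$; in particular any limit point lies in $\mathcal M(f)$. Next, using the specification property to glue and close up orbit segments, and the Bowen-regularity $\psi\in C^f(X)$ (so that $S_n\psi$ has uniformly bounded oscillation on $(n,\varepsilon)$-dynamical balls) to keep the weights of nearby orbits comparable, one shows that for $\varepsilon$ below the expansiveness constant, $Z_n(\psi)$ agrees, up to a factor that is subexponential in $n$, with the partition sum over a maximal $(n,\varepsilon)$-separated set, and therefore
$$\lim_{n\to\infty}\frac1n\log Z_n(\psi)=P(\psi).$$
Here expansiveness ensures the pressure is captured at the fixed scale $\varepsilon$, specification produces enough periodic orbits to saturate the separated-set count, and the Bowen property converts the combinatorial count into the weighted one.

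The heart of the argument is to show that a weak$^*$ limit $\nu=\lim_k\nu_{n_k}$ is an equilibrium state, for which (by the variational-principle inequality $h_\nu(f)+\int\psi\,d\nu\le P(\psi)$) it suffices to prove the reverse inequality. I would establish Gibbs bounds for $\nu$: there exist $\varepsilon>0$ and $C\ge1$ with
$$C^{-1}\le\frac{\nu\big(B_n(x,\varepsilon)\big)}{\exp\big(-nP(\psi)+S_n\psi(x)\big)}\le C\qquad\text{for all }x\in X,\ n\ge1,$$
where $B_n(x,\varepsilon)$ is the Bowen ball. The upper bound comes from expansiveness plus the Bowen property (a $B_n(x,\varepsilon)$ meets only a controlled number of period-$m$ orbits, $m\ge n$, each carrying weight comparable to $e^{S_n\psi(x)}$ times the weight $e^{S_{m-n}\psi}$ of the complementary piece), and the lower bound from specification (one can always find a definite weighted mass of periodic orbits shadowing $x,fx,\dots,f^{n-1}x$); one passes to the limit over $n_k$ using a Portmanteau-type argument with slightly shrunk and enlarged balls. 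Inserting these bounds into the Brin--Katok entropy formula gives $h_\nu(f)=P(\psi)-\int\psi\,d\nu$, so $\nu$ is an equilibrium state; and the same Gibbs bounds, applied to any two equilibrium states, show they have bounded Radon--Nikodym derivative with respect to each other and hence coincide, yielding both the uniqueness in (i) and $\nu=\mu_\psi$.

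I expect the principal obstacle to be the \emph{lower} Gibbs bound, equivalently the entropy lower bound for $\nu$: one must preclude the weighted periodic distribution from concentrating on an exponentially thin set of orbits. This is precisely where specification does the essential work --- prescribing finitely many orbit pieces, gluing them with bounded-length transitions, and closing the result into a periodic orbit while tracking $S_n\psi$ --- and the delicate point is to make the overhead of the gluing lengths uniform in $n$ so that it costs only a subexponential factor. A softer variant would bypass explicit Gibbs bounds and estimate $h_\nu(f)$ from below directly, via a counting argument on $(n,\varepsilon)$-separated subsets of $\fix(f^n)$ weighted by $\exp(S_n\psi)$ together with a Shannon--McMillan--type lemma; either way, the specification-driven count is the common engine.
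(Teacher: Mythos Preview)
The paper does not prove this theorem at all: it is stated as a classical result of Bowen and cited to \cite{Bowen_1974}, then used as a black box in the proof of Theorem~1.3. So there is no ``paper's own proof'' to compare against. Your outline is a faithful sketch of Bowen's original argument (existence via upper semicontinuity of entropy from expansiveness; identification of $\lim\frac1n\log Z_n(\psi)$ with $P(\psi)$ via specification and the Bowen property of $\psi$; Gibbs bounds on any limit point, giving both that it is an equilibrium state and uniqueness). The observation that each $\nu_n$ is already $f$-invariant is correct and useful. If you were to flesh this out, the places that need the most care are exactly the ones you flag: the lower Gibbs bound and the passage to the limit in the Gibbs estimates (the Portmanteau step with shrunk/enlarged Bowen balls), since Bowen balls need not be open or closed.
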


\begin{proof}[Proof of Theorem 1.3]
Let $\mathcal{W}:=\{f\in\expand^2_\gamma(S^1)\hspace{1mm} |\hspace{1mm} d_{C^2}(f,g)<C_0\}.$
Let $\psi_f=-\log(|f'|)$ and $\psi_g=-\log(|g'|)$.
We begin by recalling the main ideas of the proof of Theorem 1.2 that we need. It is well known that for a $C^{1+\alpha}$ expanding map $f$ on a compact manifold $M$, there exists a unique invariant probability measure $\mu_f$ which is absolutely continuous with respect to Lebesgue measure, whose density $\rho_f(x)$ is $C^\alpha$ and strictly positive. See \cite{Baladi} for full details. Moreover, $\mu_f$ is the unique equilibrium state corresponding to the potential $\psi_f$. The assumption on periodic data allows us to use Bowen's theorem to conclude that $\mu_f=h^*\mu_g$, where $h^*\mu_g$ denotes the pullback measure by $h$. Defining the functions 
$$I_f(x)=\int_0^x\rho_f(y)dy,\hspace{1mm} I_g(x)=\int_0^x\rho_g(y)dy,$$
and integrating $\mu_f=h^*\mu_g$ from $0$ to $x$, we find that $I_f(x)=I_g(h(x)),$ or $h=I_g^{-1}\circ I_f\in C^{1+\alpha}$. Observe that the latter expression $I_g^{-1}\circ I_f$ is a well-defined $C^{1+\alpha}$ function without any hypotheses on the periodic data. For this reason, we define $h_N:=I_g^{-1}\circ I_f$. In what follows we will need the following uniform bound on the densities:
\begin{lemma}
    Let $\mathcal{W}$ be a bounded open set in $\expand^2_\gamma(S^1)$, $\gamma>1$. Then there exists $C>1$ such that for all $f\in\mathcal{W}$ and every $x\in S^1$, $C^{-1}\leq \rho_f(x)\leq C$, and $|\rho_f|_{C^\alpha}<C$.
\end{lemma}
\begin{proof}
    For $0<\alpha<1$, let $\Phi: \expand^{1+\alpha}(S^1)\rightarrow C^\alpha(S^1)$ be the map sending an expanding map to it's invariant density: $\Phi(f)=\rho_f$. By \cite{Baladi} Theorem 2.10, this map is continuous. Let $U_n=\Phi^{-1}(\{\rho\in C^\alpha(S^1) | \frac{1}{n}<\rho< n, |\rho|_{C^\alpha}<C \})$. Then $\{U_n\}$ is an increasing sequence of open sets that covers  $\expand^{1+\alpha}$. Since $\mathcal{W}$ is a $C^2$ open and bounded set, it is relatively compact in the $C^{1+\alpha}$ topology by the Arzela-Ascoli theorem. Hence the closure $\overline{\mathcal{W}}$ is compact in $\expand^{1+\alpha}$, and hence must be entirely contained in one of the $U_n$. This is precisely the desired conclusion.
\end{proof}
\begin{remark}
      One can prove uniform bounds on the densities without using continuity of the map $\Phi$ by instead carefully going through the arguments of Sacksteder's proof \cite{Sacksteder} of the existence of invariant densities.
\end{remark}
\noindent Let $$\mu_f^n=\frac{1}{Z_n(\psi_f)}\sum_{y\in\fix(f^n)}\exp(S_n\psi_f(y))\delta_y$$
be the discrete measures occurring in Bowen's theorem. Observe that since  $(f^n)'(p)=(g^n)'(h(p))$ for every $p\in\fix(f^N)$, we have that $\mu_f^N=h^*\mu_g^N$:
$$
h^*\left(\frac{1}{Z_N(\psi_g)}\sum_{x\in\fix(g^N)}\exp(S_N\psi_g(x))\delta_x\right)=
\frac{1}{Z_N(\psi_g)}\sum_{x\in\fix(g^N)}\exp(S_N\psi_g(h(h^{-1}x)))\delta_{h^{-1}(x)}$$
$$
=\frac{1}{Z_N(\psi_f)}\sum_{y\in\fix(f^N)}\exp(S_N\psi_f(y))\delta_y=\mu_f^N.
$$ 
We will need the following theorem in order to estimate $d_{C^0}(h,{h}_N)$.

\begin{theorem}
Let $\mathcal{W}$ be as in the proof of Theorem 1.3. Let $\psi:S^1\rightarrow\mathbb{C}$ be a Lipschitz continuous potential with unique equilibrium state $\mu_\psi$, and let $\mu_\psi^n$ be weighted discrete measures supported on $\fix(f^n)$. Then there exist constants $C'>0$ and $0<\tau<1$, depending only on $\mathcal{W}$, such that for every Lipschitz function $\phi:S^1\rightarrow \mathbb{C}$, we have
$$\bigg| \int\phi d\mu_f-\int\phi d\mu_f^N\bigg|\leq C'||\phi||_{Lip}\tau^N. $$
\end{theorem}
\noindent
We defer the proof of Theorem 2.2 to section 3. See \cite{Kadyrov} theorem 1.5 for a more general result for the measure of maximal entropy of a subshift of finite type, and \cite{Ruhr} for a version applying to equilibrium states of countable state shifts. We now calculate 
$$|{h}_N(x)-h(x)|=|I_g^{-1}\circ I_f(x)-h(x)|=
|I_g^{-1}\circ I_f(x)-I_g^{-1}\circ I_g\circ h(x)|\leq
C |I_f(x)-I_g(h(x))|$$
$$
=C\bigg| \int_0^xd\mu_f-\int_0^{h(x)}d\mu_g\bigg|
\leq C\bigg|\int_0^xd\mu_f-\int_0^{h(x)}d\mu_g^N\bigg|
+C\bigg|\int_0^{h(x)}d\mu_g^N-\int_0^{h(x)}d\mu_g\bigg|$$
$$
=C\bigg|\int_0^xd\mu_f-\int_0^xd(h^*\mu_g^N)\bigg|
+C\bigg|\int_0^{h(x)}d\mu_g^N-\int_0^{h(x)}d\mu_g\bigg|$$
$$
=C\bigg|\int_0^xd\mu_f-\int_0^xd\mu_f^N\bigg|
+C\bigg|\int_0^{h(x)}d\mu_g^N-\int_0^{h(x)}d\mu_g\bigg|,$$
where $\sup|(I_g^{-1})'|=\sup|\rho_g^{-1}|\leq C$.
By symmetry, it suffices to show that 
$$\bigg|\int_0^xd\mu_f-\int_0^xd\mu_f^N\bigg|\leq K\lambda^N,$$
uniformly in $x\in S^1$. We split the proof into 3 cases:

\noindent
\textbf{Case 1} Assume that $2\tau^{N/2}\leq x\leq 1-2\tau^{N/2}$:
Fix $2\tau^{N/2}\leq x\leq 1-2\tau^{N/2}$ and define a family of continuous functions as follows. 
\[ \phi_x^s(y)=
    \begin{cases}
        \tau^{-N/2}(y+s) & -s\leq y\leq \tau^{N/2}-s \\
        1 & \tau^{N/2}-s\leq y\leq s-(\tau^{N/2}-x) \\
        -\tau^{-N/2}(y-s)+\tau^{-N/2}x & s-(\tau^{N/2}-x)\leq y\leq x+s \\
        0 & x+s\leq y\leq 1-s; \\
    \end{cases}
\]
where $\tau$ is as in Theorem 2.2.
Next, we estimate
$$
\bigg|\int_0^xd\mu_f-\int_0^xd\mu_f^N\bigg|\leq
\bigg|\int_0^xd\mu_f-\int\phi_x^sd\mu_f\bigg|
+\bigg|\int\phi_x^sd\mu_f-\int\phi_x^sd\mu_f^N\bigg|
+\bigg|\int\phi_x^sd\mu_f^N-\int_0^xd\mu_f^N\bigg|$$
Let us consider each term on the right separately. The first term can be rewritten as 
$$
\bigg|\int_0^xd\mu_f-\int\phi_x^sd\mu_f\bigg|
=\bigg|\int(\chi_{[0,x]}(y)-\phi_x^s(y))\rho(y)dy\bigg|
$$
Observe $|\chi_{[0,x]}(y)-\phi_x^s(y)|=0$ except on a set of measure less than $2\tau^{N/2}$ (depending on $s$), and is otherwise bounded above by $1$. Since in addition $\rho_f$ is continuous and hence bounded, we find that 
$$\bigg|\int(\chi_{[0,x]}(y)-\phi_x^s(y))\rho(y)dy\bigg|\leq C\tau^{N/2}.$$
For the second term, we observe that $\phi_x^s$ is Lipschitz for every $x$ and $s$ with $||\phi_x^s||_{Lip}=1+\tau^{-N/2}$. We can therefore apply Theorem 2.2 to find
$$\bigg|\int\phi_x^sd\mu_f-\int\phi_x^sd\mu_f^N\bigg|\leq
C(1+\tau^{-N/2})\tau^N=C(\tau^N+\tau^{N/2})\leq C\tau^{N/2}$$
We claim that the final term is zero for some choice of $s$.
Define
$$\Phi(s)=\int\phi_x^sd\mu_f^N-\int_0^xd\mu_f^N$$
Then $\Phi(0)<0$ since $\phi_x^0\leq \chi_{[0,x]}$ and this inequality is strict on $[0,\tau^{N/2})$. Similarly, $\Phi(\tau^{N/2})>0$. We claim that $\Phi(s)$ is a continuous function of $s$. Indeed, fixing $\varepsilon>0$, we can find a $\delta>0$ such that if $|s_1-s_2|<\delta$, then
$||\phi_x^{s_1}-\phi_x^{s_2}||_{\infty}<\varepsilon$. Then
$$
|\Phi(s_1)-\Phi(s_2)|=\bigg|\int(\phi_x^{s_1}-\phi_x^{s_2})d\mu_f^N\bigg|<\varepsilon\mu_f^N(0,1)=\varepsilon.
$$
By the intermediate value theorem we can therefore choose some $0<s<\tau^{N/2}$ so that $\Phi(s)=0$.

\noindent
\textbf{Case 2} Assume that $x>1-2\tau^{N/2}$: The proof is nearly identical as in Case 1 but we use a slightly different family of Lipschitz functions:
\[ \phi_x^s(y)=
    \begin{cases}
        \tau^{-N/2}(y+s) & -s\leq y\leq \tau^{N/2}-s \\
        1 & \tau^{N/2}-s\leq y\leq s-(\tau^{N/2}-x) \\
        -\tau^{-N/2}(y-s)+\tau^{-N/2}x & s-(\tau^{N/2}-x)\leq y\leq x+s \\
        0 & x+s\leq y\leq 1-s; \\
    \end{cases}
\]
for $0\leq s\leq \frac{1+x}{2}$, and 
\[ \phi_x^s(y)=
    \begin{cases}
      1 & \tau^{N/2}-s\leq y\leq s-(\tau^{N/2}-x) \\
       -\tau^{-N/2}(y-s)+\tau^{-N/2}x & s-(\tau^{N/2}-x)\leq y\leq \frac{1+x}{2} \\
        \tau^{-N/2}(y-1+s) & \frac{1+x}{2}\leq y\leq 1-s; \\
    \end{cases}
\]
for $\frac{1+x}{2}\leq s\leq \tau^{N/2}$. The remainder of the proof is identical to Case 1.

\noindent
\textbf{Case 3} Assume that $x\leq 2\tau^{N/2}$: The proof is again identical to Case 1 but with the following family:
\[ \phi_x^s(y)=
    \begin{cases}
      \tau^{-N/2}(y+s) & -s\leq y\leq \frac{x}{2} \\
       -\tau^{-N/2}(y-s)+\tau^{-N/2}x & \frac{x}{2}\leq y\leq s \\
        0 & s\leq y\leq 1-s; \\
    \end{cases}
\]
for $0\leq s\leq \tau^{N/2}-\frac{x}{2}$, and
\[ \phi_x^s(y)=
    \begin{cases}
      \tau^{-N/2}(y+s) & -s\leq y\leq \tau^{N/2}-s \\
      1 & \tau^{N/2}-s\leq y\leq s+\tau^Nx-\tau^{-N/2}\\
       -\tau^{-N/2}(y-s)+\tau^{-N/2}x & s+\tau^Nx-\tau^{-N/2}\leq y\leq s \\
        0 & s\leq y\leq 1-s; \\
    \end{cases}
\]
for $\tau^{N/2}-\frac{x}{2}\leq s\leq \tau^{N/2}$. Observe that for every $0\leq s\leq \tau^{N/2}$, $||\phi_x^s||_{\infty}\leq 1$, so that $||\phi_x^s||_{Lip}\leq 1+ \tau^{-N/2}$, so the remainder of the argument in Case 1 carries over verbatim.

In each case, we combine our bounds on these three terms to conclude that for every $x\in S^1$, $|{h}_N(x)-h(x)|\leq K\tau^{N/2}$. Setting $\lambda=\tau^{1/2}$ yields the first stated conclusion.
As a first consequence we obtain a bound on the $C_0$ distance between $f$ and $f_N$:
$$
|f(x)-f_N(x)|=|h^{-1}(g((h(x))-{h}_N^{-1}(g({h}_N(x))|\leq$$
$$
|h^{-1}(g((h(x))-{h}_N^{-1}(g(h(x))|+
|{h}_N^{-1}(g((h(x))-{h}_N^{-1}(g({h}_N(x))|\leq$$
$$
d_{C^0}(h^{-1},{h}_N^{-1})+Lip({h}_N^{-1}\circ g)d_{C^0}(h,{h}_N)\leq K(1+Lip({h}_N^{-1}\circ g))\lambda^N.
$$
Note that $Lip({h}_N^{-1}\circ g)=\sup|D({h}_N^{-1}\circ g)|=\sup(|D(I_f^{-1}\circ I_g\circ g)|\leq\frac{\max \rho_g}{\min \rho_f}\max|g'|$, which is uniformly bounded in $\mathcal{W}$.
\begin{lemma}
Fix $M>0$ and $0<\alpha\leq 1$. Let $\phi:S^1\rightarrow S^1$ be a $C^{1+\alpha}$ function with $|\phi'|_{C^\alpha}<M$, and let $\varepsilon,\delta>0$ be such that 

$$\sup_{|x-y|>\delta}\frac{|\phi(x)-\phi(y)|}{|x-y|}<\varepsilon,$$
then $|\phi|_{C^1}<\frac{M}{\alpha+1}\delta^\alpha+\epsilon$.
\end{lemma}
\begin{proof}
Suppose first that $\phi(0)=0$ and $\phi'(0)=\sup |\phi'|=:\varepsilon'$, and take $|x|>\delta$. Then since the $\alpha-$H\"older seminorm of $\phi'$ is bounded by $M$ we have
$$
\max_{y\neq 0}\frac{|\phi'(0)-\phi'(y)|}{|y|^\alpha}<M\implies
-M|y|^\alpha\leq \phi'(0)-\phi'(y)\leq M|y|^\alpha\implies
\phi'(y)\geq \phi'(0)-M|y|^\alpha.
$$
Putting this together with
$$\frac{\phi(x)}{x}=\frac{1}{x}\int_0^x\phi'(y)dy,$$
we find
$$
\frac{\phi(x)}{x}\geq\frac{1}{x}\int_0^x(\varepsilon'-My^\alpha)dy
=\varepsilon'-\frac{Mx^\alpha}{\alpha+1}.$$
It follows that
$$
\varepsilon'\leq \frac{M}{\alpha+1}|x|^\alpha+\frac{|\phi(x)|}{|x|}\leq \frac{M}{\alpha+1}|x|^\alpha+\varepsilon\rightarrow \frac{M}{\alpha+1}\delta^\alpha+\varepsilon,$$
where in the last line we let $|x|\rightarrow\delta$.
Finally if $|\phi|_{C^0}$ is attained at some other point $x_0\in S^1$, then simply apply the preceding argument to 
$\Tilde{\phi}(x)=\phi(x+x_0)-\phi(0)$.
\end{proof}
To finish the proof of Theorem 2.2, we will apply Lemma 2.2 to the function $F={f}_N-f$. Let $\varepsilon=2K'\lambda^{N/2}$ and $\delta=\lambda^{N/2}$. Then
$$
\sup_{|x-y|>\delta}\frac{|F(x)-F(y)|}{|x-y|}<\frac{2|F|_{C^0}}{\delta}\leq \frac{2K'd_{C^0}(h,{h}_N)}{\lambda^{N/2}}\leq
\frac{2K'\lambda^N}{\lambda^{N/2}}=2K'\lambda^{N/2}.
$$
It remains to prove that $|F'|_{C^\alpha}$ is uniformly bounded for $f\in\mathcal{W}$ for every $\alpha<1$. We have
$$
|F'|_{C^\alpha}\leq |f'|_{C^\alpha}+|{f}_N'|_{C^\alpha},
$$
and since $f$ is uniformly bounded in the $C^2$ seminorm, it will be uniformly bounded in the $C^{1+\alpha}$ seminorm. So it remains to uniformly bound 
$|{f}_N'|_{C^\alpha}=|(h_N^{-1}\circ g\circ h_N)'|_{C^\alpha}=|((h_N^{-1})'\circ g\circ h_N)(g'\circ h_N)h_N'|_{C^\alpha}$. By the product rule for the $\alpha-$H\"older seminorm and symmetry between $h_N=I_g^{-1}\circ I_f$ and $h_N^{-1}=I_f^{-1}\circ I_g$, it suffices to uniformly bound $|h_N'|_{C^\alpha}=|\frac{\rho_f}{\rho_g\circ h_N}|_{C^\alpha}$. By Lemma 2.1, we can uniformly bound $|\rho_f|_{C^\alpha}$ for $f\in\mathcal{W}$ and hence by properties of the  $\alpha-$H\"older seminorm, we can uniformly bound $|\frac{\rho_f}{\rho_g\circ h_N}|_{C^\alpha}.$ Therefore, we may apply Lemma 2.2 and we find that 
$|F|_{C^1}\leq\frac{M}{\alpha+1}\delta^\alpha+\varepsilon=
\frac{M}{\alpha+1}(2K'\lambda^{\alpha N/2})+\lambda^{N/2}=K''\lambda^{\alpha N/2}$. For $\alpha<1$ such that $\lambda^{\alpha/2}=\lambda_0$, we get the desired conclusion.
\end{proof}
\begin{cor}
    Let $f,g\in\expand^{r+1+\alpha}_\gamma(S^1)$ for $r\in\mathbb{N}$, $r\geq 2$, and suppose that $d_{C^{r+1+\alpha}}(f,g)<C_0$. Under the same hypotheses of Theorem 1.3, we have that there exists a constant $K_r>0$ independent of $f$ such that  $d_{C^r}(f,{f}_N)\leq K_r\lambda^{2^{-r}N}$.
\end{cor}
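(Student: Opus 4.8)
The plan is to bootstrap the $C^1$ estimate of Theorem 1.3 to a $C^r$ estimate by inducting on the order of the derivative, each step invoking Lemma 2.2 with $\alpha=1$ and the extra derivative supplied by the hypothesis $f,g\in\expand^{r+1+\alpha}_\gamma(S^1)$. Write $F=f_N-f$ and $\mathcal{W}_r=\{f\in\expand^{r+1+\alpha}_\gamma(S^1)\,:\,d_{C^{r+1+\alpha}}(f,g)<C_0\}$. The proof of Theorem 1.3 already gives $||F||_{C^0}=d_{C^0}(f,f_N)\leq K\lambda^N$ uniformly over $\mathcal{W}_r$; this is the base case, and the inductive step is: \emph{if $||F||_{C^{k-1}}\leq c_{k-1}\lambda^{2^{-(k-1)}N}$ uniformly and $||F||_{C^{k+1}}\leq M$ uniformly, then $||F||_{C^{k}}\leq c_{k}\lambda^{2^{-k}N}$ uniformly}. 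Running this for $k=1,\dots,r$, the case $k=r$ is exactly $d_{C^r}(f,f_N)\leq K_r\lambda^{2^{-r}N}$.

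The first ingredient is the uniform bound $||F||_{C^{r+1}}\leq M$ on $\mathcal{W}_r$, and for it I would upgrade Lemma 2.1 to regularity $C^{r+\alpha}$: on a bounded open $\mathcal{W}_r\subset\expand^{r+1+\alpha}_\gamma(S^1)$ there is $C=C(\mathcal{W}_r)>1$ with $C^{-1}\leq\rho_f\leq C$ and $|\rho_f|_{C^{r+\alpha}}<C$ for every $f\in\mathcal{W}_r$. The argument is that of Lemma 2.1 verbatim: a $C^{r+1+\alpha}$-bounded set is relatively compact in $C^{r+1+\alpha'}$ for $\alpha'<\alpha$ by Arzel\`a--Ascoli, the map $f\mapsto\rho_f$ is continuous from $\expand^{r+1+\alpha'}(S^1)$ into $C^{r+\alpha'}(S^1)$ by a standard bootstrap of the transfer operator (cf.\ \cite{Baladi}), so $\overline{\mathcal{W}_r}$ lies in one of the nested sublevel sets; alternatively one extracts uniform constants from Sacksteder's construction \cite{Sacksteder} as in Remark 2.1. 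Since $I_f'=\rho_f$ is bounded above and below, this gives uniform bounds on $||I_f||_{C^{r+1}}$ and $||I_f^{-1}||_{C^{r+1}}$, and then, via the chain and product rules (Fa\`a di Bruno bookkeeping, all of it uniform in $\mathcal{W}_r$), on the $C^{r+1}$ norms of $h_N=I_g^{-1}\circ I_f$, $h_N^{-1}=I_f^{-1}\circ I_g$, $f_N=h_N^{-1}\circ g\circ h_N$, and hence of $F$.

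For the inductive step I would apply Lemma 2.2 with $\alpha=1$ to $\phi:=F^{(k-1)}$, which is legitimate because $|\phi'|_{C^1}=\sup|F^{(k+1)}|\leq||F||_{C^{k+1}}\leq M$ and $\sup_{|x-y|>\delta}|\phi(x)-\phi(y)|/|x-y|\leq 2||F||_{C^{k-1}}/\delta$. Taking $\delta=\lambda^{2^{-k}N}$, so that $2||F||_{C^{k-1}}/\delta\leq 2c_{k-1}\lambda^{2^{-k}N}$, Lemma 2.2 gives $\sup|F^{(k)}|=|\phi|_{C^1}\leq\tfrac{M}{2}\delta+2||F||_{C^{k-1}}/\delta=O(\lambda^{2^{-k}N})$, whence $||F||_{C^k}=||F||_{C^{k-1}}+\sup|F^{(k)}|\leq c_k\lambda^{2^{-k}N}$ with $c_k$ controlled by $M$ and $c_{k-1}$. (For small $N$, where $\delta\geq1$, the bound is trivial after enlarging $K_r$.) Iterating from $c_0=K$ and stopping at $k=r$ yields the corollary, with $K_r=c_r$ depending only on $\mathcal{W}_r$ through $M$, $C$, and the constants $K,\lambda$ of Theorem 1.3.

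The \emph{main obstacle} is this first ingredient: the uniform $C^{r+\alpha}$ control of the invariant densities and the propagation of uniform bounds through $h_N$, $h_N^{-1}$ and $f_N$. Once $||F||_{C^{r+1}}\leq M$ is in hand the induction is routine, and in fact cleaner than the $C^1$ step in the proof of Theorem 1.3: there only $|\rho_f|_{C^\alpha}$ was available, and using Lemma 2.2 with $\alpha<1$ is what forced the loss from $\lambda^{N/2}$ to $\lambda_0^N$. I would also remark in passing that a single application of the Landau--Kolmogorov interpolation inequality between $C^0$ and $C^{r+1}$ already yields the stronger rate $\lambda^{N/(r+1)}$; the stated $\lambda^{2^{-r}N}$ is retained only because it mirrors the inductive structure above and is amply sufficient for the applications.
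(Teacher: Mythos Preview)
Your proposal is correct and follows essentially the same inductive strategy as the paper: apply Lemma~2.2 with $\alpha=1$ to successive derivatives $F^{(k-1)}$, the requisite uniform $C^{r+1}$ bound on $F$ coming from the Lemma~2.1/Arzel\`a--Ascoli compactness argument you describe (the paper phrases this as ``compactly embed the set $\mathcal{W}$ in $C^{r+2}$, thereby getting uniform bounds on $|F^{(r+1)}|_{C^1}$''). Your closing remark on Landau--Kolmogorov interpolation in fact anticipates the paper's next corollary, which uses exactly that interpolation idea---though under a stronger $C^\infty$ hypothesis---to recover the sharper rate $\lambda_0^N$.
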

\begin{proof}
     We proceed by induction. The base case $r=2$ follows exactly as in Theorem 1.3, except the added assumption that our systems are $C^{2+\alpha}$ allow us to get uniform bounds on $|F'|_{C^1}$ (using the argument of Lemma 2.1) and apply Lemma 2.2 with $\alpha=1$. Letting $F={f}_N-f$, we assume by induction that we have proven $|F|_{C^r}\leq K_r\lambda^{2^{-r}N}$. To obtain a similar estimate for $|F|_{C^{r+1}}$, we apply Lemma 2.2 to the function $F^{(r)}$, with $\varepsilon=2K_r\lambda^{2^{-r-1}N}$ and $\delta=\lambda^{2^{-r-1}N}.$ For these choices we find 
    $$
\sup_{|x-y|>\delta}\frac{|F^{(r)}(x)-F^{(r)}(y)|}{|x-y|}<\frac{2|F|_{C^r}}{\lambda^{2^{-r}N}}\leq \frac{2K_{r}\lambda^{2^{-r}N}}{\lambda^{2^{-r-1}N}}=K_r\lambda^{2^{-r-1}N}.$$
The assumption that are maps are $C^{r+2+\alpha}$ is so that we can compactly embed the set $\mathcal{W}$ in $C^{r+2}$, thereby getting uniform bounds on
$|F^{(r+1)}|_{C^1}$.
The conclusion now follows Lemma 2.2 exactly as in the proof of Theorem 1.3.
\end{proof}
The next corollary establishes a similar estimate on the exponential decay of $d_{C^r}(f,{f}_N)$ without such a loss of exponent under the stronger assumption that $f$ and $g$ are close in the $C^k$-topology for all $k$.
\begin{cor}
    Let $f,g\in\expand^{\infty}_\gamma(S^1)$ and suppose that $\sup_{k\geq 0}d_{C^k}(f,g)<C_0$. Then for any $r\in\mathbb{N}$ and any $0<\lambda^{1/2}<\lambda_0<1$, there exists a constant $K_r'>0$ such that under the hypotheses of Theorem 1.3, we have $d_{C^r}(f,{f}_N)\leq K_r'\lambda_0^N$.
\end{cor}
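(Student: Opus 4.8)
The plan is to bootstrap from Corollary 2.1 by noting that its exponent loss accumulates only through the finitely many inductive steps needed to reach a \emph{fixed} target regularity $r$, and that under the $C^\infty$-closeness hypothesis we are free to run the argument of Corollary 2.1 with \emph{more derivatives to spare} than the minimal $r+2+\alpha$ required. Concretely, I would fix $r$ and, instead of proving a $C^r$ estimate by $r-1$ successive applications of Lemma 2.2 (each of which roughly halves the exponent, producing the $2^{-r}$ in Corollary 2.1), I would prove a $C^r$ estimate in a single application of Lemma 2.2 to the function $F^{(r-1)}$, where $F = f_N - f$. The point is that the $C^0$ bound $d_{C^0}(h,h_N)\le K\lambda^N$ from Theorem 1.3 is available with the \emph{full} exponent $\lambda^N$, so if one can interpolate it directly up to control $F^{(r-1)}$ in a Lipschitz-type seminorm at scale $\delta$, only one halving is incurred, yielding $\lambda^{N/2}$, which is then absorbed into any $\lambda_0 > \lambda^{1/2}$.

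The key steps, in order, would be: (i) Recall from the proof of Theorem 1.3 and Corollary 2.1 that $h_N = I_g^{-1}\circ I_f$ and $f_N = h_N^{-1}\circ g\circ h_N$ are built from the invariant densities $\rho_f,\rho_g$, and that under $\sup_k d_{C^k}(f,g) < C_0$ the set $\mathcal{W}$ is bounded in every $C^k$; hence by the Arzela--Ascoli argument of Lemma 2.1 (applied in $C^{k+\alpha}$ for each $k$) the densities $\rho_f$ are uniformly bounded above and below and uniformly bounded in every $C^k$ seminorm. (ii) Deduce that $h_N, h_N^{-1}$, and therefore $f_N = h_N^{-1}\circ g\circ h_N$, are uniformly bounded in every $C^k$ seminorm over $\mathcal{W}$; in particular $F = f_N - f$ has $|F^{(k)}|_{C^1}$ uniformly bounded for every $k$. (iii) Use the $C^0$ estimate $|F|_{C^0} = |f_N - f|_{C^0}\le d_{C^0}(h^{-1},h_N^{-1}) + \mathrm{Lip}(h_N^{-1}\circ g)\,d_{C^0}(h,h_N)\le K'''\lambda^N$ from the proof of Theorem 1.3 together with the uniform higher-seminorm bounds to run a Landau--Kolmogorov-type interpolation: since $|F|_{C^0}\le K'''\lambda^N$ and $|F|_{C^{r+1}}\le M_{r+1}$, standard convexity of derivative norms gives $|F|_{C^{r}}\le C_r\,(|F|_{C^0})^{1/(r+1)}(|F|_{C^{r+1}})^{r/(r+1)}\le C_r' \lambda^{N/(r+1)}$. (iv) Observe $\lambda^{N/(r+1)}$ is \emph{worse} than $\lambda^{N/2}$ for $r\ge 2$, so instead bootstrap one derivative at a time: having $|F|_{C^0}\le K'''\lambda^N$, apply Lemma 2.2 to $F$ with $\varepsilon = 2K'''\lambda^{N/2}$, $\delta = \lambda^{N/2}$ and the uniform bound on $|F'|_{C^1}$ to get $|F|_{C^1}\le C\lambda^{N/2}$; then \emph{re-apply} Theorem 2.2 / the original $C^0$ machinery is not needed — instead iterate Lemma 2.2, but crucially feed back the improved estimate so that at stage $j$ one applies Lemma 2.2 to $F^{(j)}$ with $\delta = \lambda^{N/2}$ (not a shrinking $\delta$), using that $|F|_{C^{j-1}}$ is still of order $\lambda^{N/2}$ from the previous stage and $|F^{(j)}|_{C^1}$ is uniformly bounded. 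Each stage then preserves the exponent $N/2$ rather than halving it again, because $\sup_{|x-y|>\delta}|F^{(j)}(x)-F^{(j)}(y)|/|x-y| \le 2|F^{(j)}|_{C^0}/\delta$ and $|F^{(j)}|_{C^0} = |F|_{C^{j}}$ is $O(\lambda^{N/2})$ inductively — wait, this needs $|F|_{C^{j}}$ bounded by $\lambda^{N/2}$ which is exactly what the $j$-th stage produces. So induction on $j$ from $1$ to $r$ gives $|F|_{C^r}\le K_r'' \lambda^{N/2}$. (v) Finally, for any $\lambda_0$ with $\lambda^{1/2} < \lambda_0 < 1$, we have $\lambda^{N/2} = (\lambda^{1/2})^N \le \lambda_0^N$ for all $N$, hence $d_{C^r}(f,f_N)\le |F|_{C^r} \le K_r'\lambda_0^N$ with $K_r' = K_r''$, which is the claim.

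The main obstacle is step (iv): making precise the claim that the exponent $N/2$ is \emph{preserved} rather than re-halved at each derivative level. In the proof of Corollary 2.1 the exponent halves at each stage precisely because the input $|F|_{C^r}\le K_r\lambda^{2^{-r}N}$ is already degraded and one chooses $\delta = \lambda^{2^{-r-1}N}$; but if at every stage we could instead use $\delta = \lambda^{N/2}$ (a \emph{fixed} scale independent of the derivative level), then the hypothesis $\sup_{|x-y|>\delta}|F^{(j)}(x)-F^{(j)}(y)|/|x-y| < \varepsilon$ of Lemma 2.2 requires $\varepsilon \approx 2|F^{(j)}|_{C^0}/\delta = 2|F|_{C^j}/\lambda^{N/2}$, and for this to be $O(\lambda^{N/2})$ we need $|F|_{C^j} = O(\lambda^N)$, not merely $O(\lambda^{N/2})$. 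Thus a naive one-derivative-at-a-time bootstrap does \emph{not} preserve the exponent unless we start from the full-strength $\lambda^N$ bound at \emph{each} level. The honest fix is to accept the interpolation estimate of step (iii) in the sharper two-point form: prove by induction that $|F|_{C^j}\le C_j \lambda^{N/2}$ for all $j\le r$ using \emph{Landau--Kolmogorov interpolation between $C^0$ and $C^{2r}$} — since $\sup_k d_{C^k}(f,g) < C_0$ lets us bound $|F|_{C^{2r}}$ uniformly, and $|F|_{C^0}\le K'''\lambda^N$, convexity gives $|F|_{C^j}\le C (\lambda^N)^{1 - j/(2r)}\le C\lambda^{N/2}$ for all $j \le r$. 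This is clean and avoids Lemma 2.2 entirely at the cost of one extra invocation of the uniform bounds, and it is what I would write. The only subtlety to check carefully is that the Landau--Kolmogorov inequality on $S^1$ holds with the constants uniform over $\mathcal{W}$, which is immediate since the inequality is a universal functional inequality on the circle.
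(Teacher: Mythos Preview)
Your final approach --- Landau--Kolmogorov interpolation between $\|F\|_{C^0}\le K\lambda^N$ and a uniform bound on $\|F\|_{C^{2r}}$ coming from the uniform $C^k$ control of the densities $\rho_f$ --- is correct and is essentially the paper's own proof, which likewise proceeds by interpolation (there between $\|F\|_{C^1}\le K_1\lambda^{N/2}$ and $\|F\|_{C^{k_2}}\le K_{k_2}$, with $k_2$ chosen large depending on $\lambda_0$ and the high endpoint bounded via Corollary~2.1). Your choice of $C^0$ as the low endpoint is if anything slightly cleaner, since it yields exactly $\lambda^{N/2}$ at the $C^r$ level without needing $k_2$ to depend on $\lambda_0$; the detour through the failed one-derivative-at-a-time bootstrap in step~(iv) is unnecessary once you have the interpolation idea in hand.
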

\begin{proof}
    The proof follows from interpolation theory on the spaces $C^r(S^1)$ (see \cite{Interpolation} Remark 1.22). For $k_1<m<k_2\in\mathbb{N}$ we have that 
    $||\phi||_{C^m}\leq C_{k_1,k_2,m}||\phi||_{C^{k_1}}^{1-t}||\phi||_{C^{k_2}}^t$ for any $\phi\in C^{k_2}(S^1)$, where $t=\frac{m-k_1}{k_2-k_1}$. We will apply this with $\phi=F={f}_N-f,$ $k_1=1$, and by taking $k_2$ sufficiently large, we have that $t$ can be made arbitrarily close to $0$. We choose $k_2$ so that $\lambda^{\frac{1-t}{2}}\leq \lambda_0$. We want to bound the term $||F||_{C^{k_2}}$ using the bound $d_{C^{k_2+2}}(f,g)<C_0$. Applying Corollary 2.1 gives us 
    $||F||_{C^{k_2}}\leq K_{k_2}\lambda^{\frac{tN}{2^{k_2}}}\leq K_{k_2}$. Hence by our interpolation inequality (collecting all constants into $C$):
    $$
    ||F||_{C^m}\leq C_{k_1,k_2,m}||F||_{C^{k_1}}^{1-t}||F||_{C^{k_2}}^t\leq C_{k_1,k_2,m} K_{1}^{1-t}K_{k_2}^t\left(\lambda^{\frac{1-t}{2}}\right)^N\leq C\lambda_0^N.
    $$
\end{proof}

\section{Effective Equidistribution}
We begin by reviewing the basic definitions of subshifts of finite type and transfer operators. For a more detailed treatment, see \cite{Baladi} and \cite{Parry_Pollicott}.
Let $A$ be an irreducible and aperiodic $0,1$-matrix, which we will refer to as a transition matrix, and consider the set
$$
\Sigma_A^+:=\{x\in\{1,\cdots,s\}^{\mathbb{Z}_{\geq0}}\hspace{1mm}|\hspace{1mm} A(x_i,x_{i+1})=1,\forall i\geq 0\}.
$$
We interpret $\Sigma_A^+$ as the set of all sequences in $s$-symbols that are allowed by the transition matrix $A$. Consider the left shift map $\sigma_A^+:\Sigma_A^+\rightarrow\Sigma_A^+$ defined by 
$(\sigma_A^+(x))_n=x_{n+1}$. We refer to the dynamical system 
$(\Sigma_A^+,\sigma_A^+)$ as a one-sided subshift of finite type. Analogously, we define the two-sided subshift of finite type to be the system $(\Sigma_A,\sigma_A)$, where
$$
\Sigma_A:=\{x\in\{1,\cdots,s\}^{\mathbb{Z}}\hspace{1mm}|\hspace{1mm} A(x_i,x_{i+1})=1,\forall i\in\mathbb{Z}\},
$$
and $(\sigma_A(x))_n=x_{n+1}$.
The key difference between the two systems is that for the one-sided subshift of finite type, $\sigma_A^+$ is non-invertible, whereas for the two-sided subshift of finite type, $\sigma_A$ is invertible.
When our transition matrix is clear, and when it is also clear whether we are talking about a one-sided or two-sided shift, we shall denote the left shift map simply as $\sigma$. 

We topologize $\Sigma_A^+$ with the metric 
$d_{\theta}(x,y)=\theta^{\max\{n\geq0|x_i=y_i, 0\leq i<n\}}$, where $0<\theta<1$ is a fixed constant. Notice that with respect to this metric, $\sigma$ is a $\theta$-expansion. Let $\mathcal{F}_\theta^+$ denote the Banach space of all functions $\phi:\Sigma_A^+\rightarrow\mathbb{C}$ which are Lipschitz continuous with respect to this metric. Denote by $|\cdot|_\theta$ and $||\cdot||_\theta$ the Lipschitz seminorm and Lipschitz norm with respect to this metric, respectively. We analogously, topologize $\Sigma_A$ and define $\mathcal{F}_\theta$.
\begin{definition}
    Fix a weight function $\psi\in\mathcal{F}_\theta^+$ and define the Ruelle transfer operator $\mathcal{L}_\psi:\mathcal{F}_\theta^+\rightarrow\mathcal{F}_\theta^+$ by the formula

$$
\mathcal{L}_\psi(\phi)(x)=\sum_{\sigma(y)=x}e^{\psi(y)}\phi(y).
$$
\end{definition}

By the Ruelle-Perron-Frobenius Theorem (see \cite{Baladi} theorem 1.5), the operator $\mathcal{L}_\psi$ is quasi-compact, with a unique maximal positive simple eigenvalue $\lambda=e^{P(\psi)}$ corresponding to a strictly positive eigenfunction $\rho$, and all other points of the spectrum lie in a strictly smaller disc.
Let us further assume that the transfer operator is normalized so that $\mathcal{L}_\psi(1)=e^{P(\psi)}$ (which can always be accomplished by replacing the weight $\psi$ with $\overline{\psi}=\psi+\log(\rho)-\log(\rho)\circ\sigma$, and observing that $P(\psi)=P(\overline{\psi}))$. Then the eigenmeasure $\mu_\psi$ corresponding to the eigenvalue $e^{P(\psi)}$ of the dual operator $\mathcal{L}_\psi^*$ is the unique equilibrium state of the potential $\psi$ . Observe that 

$$
\mathcal{L}_\psi^n(\phi)(x)=\sum_{\sigma^n(y)=x}e^{S_n\psi(y)}\phi(y).
$$
What follows is the analog of Theorem 2.2 for subshifts of finite type.

\begin{theorem}[Effective Equidistribution for Equilibrium States]
Let $(\Sigma_A^+,\sigma_A^+)$ be a subshift of finite type, where the transition matrix $A$ is irreducible and aperiodic, and let $\psi\in\mathcal{F}_\theta^+$ be a Lipschitz continuous potential. Then there exists constants $C>0$ and $0<\tau<1$ such that for any $\phi\in\mathcal{F}_\theta^+$ and all $n\in\mathbb{N}$, 

$$
\bigg|\int \phi d\mu_{\psi,n}-\int \phi d\mu_\psi\bigg|\leq
C||\phi||_\theta\tau^n,
$$
where $\mu_\phi$ is the unique equilibrium state of $\psi$.
\end{theorem}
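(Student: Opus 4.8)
The plan is to reduce the statement to two ingredients: the spectral gap for the normalized Ruelle operator (the output of the Birkhoff-cone analysis recalled in the appendix) and the classical \emph{exact} trace formula for one-step potentials on subshifts of finite type. We take $\psi$ real-valued, as in the intended application; a complex Lipschitz $\phi$ is handled by splitting into real and imaginary parts. First I would normalize. Replacing $\psi$ by $\psi+\log\rho_\psi-\log\rho_\psi\circ\sigma-P(\psi)$ changes neither the equilibrium state $\mu_\psi$ nor the discrete measures $\mu_{\psi,n}$: on an $n$-periodic orbit the coboundary telescopes, so $S_n$ of the new potential differs from $S_n\psi$ only by the constant $-nP(\psi)$, which cancels in the ratio defining $\mu_{\psi,n}$; hence I may assume $\mathcal{L}_\psi\mathbf 1=\mathbf 1$, so that $\mu_\psi$ is $\sigma$-invariant and the cone machinery supplies $C_1>0$, $\tau_1\in(0,1)$ with $\|\mathcal{L}_\psi^n\phi-\mu_\psi(\phi)\mathbf 1\|_\theta\le C_1\tau_1^n\|\phi\|_\theta$ for all $\phi\in\mathcal{F}_\theta^+$, $n\ge1$. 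The crucial feature, which I will use repeatedly, is that the cone and its contraction ratio depend only on $\theta$ and on bounds for $\|\psi\|_\infty,|\psi|_\theta$, so the \emph{same} $C_1,\tau_1$ work for every potential in a fixed Lipschitz ball.

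Next I would approximate the potential by a locally constant one. For $m\ge2$ let $\psi_m$ be the conditional expectation of $\psi$ onto the finite $\sigma$-algebra of $m$-cylinders, so that $\psi_m$ is constant on $m$-cylinders, $\|\psi-\psi_m\|_\infty\le|\psi|_\theta\theta^m$ and $|\psi_m|_\theta\le|\psi|_\theta$; a further coboundary-plus-constant correction produces a normalized, still locally constant potential $\hat\psi_m$ with $\mu_{\hat\psi_m,n}=\mu_{\psi_m,n}$ and $\|\hat\psi_m-\psi\|_\infty\le C_2\theta^m$ (using that the eigendata of $\mathcal{L}_{\psi_m}$ are $O(\theta^m)$-close to those of $\mathcal{L}_\psi$, a consequence of the uniform gap). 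Because $\hat\psi_m$ is constant on $m$-cylinders it descends, under the $(m-1)$-block recoding of $(\Sigma_A^+,\sigma)$ onto a \emph{topologically conjugate} SFT, to a one-step potential; the classical trace formula on that recoded SFT then gives $Z_n(\hat\psi_m)=\tr(T_m^n)$ and $\sum_{\sigma^n x=x}e^{S_n\hat\psi_m(x)}\phi(x)=\tr(T_m^nD_\phi)$ for $\phi$ constant on $(m-1)$-cylinders, where $T_m$ is the explicit nonnegative transition matrix, of size at most $s^{m-1}$, and $D_\phi$ the diagonal matrix of its cylinder-values. By aperiodicity $T_m$ is primitive with simple top eigenvalue $1$; since the spectrum of $T_m$ is contained in that of $\mathcal{L}_{\hat\psi_m}$ acting on $\mathcal{F}_\theta^+$, the uniform gap forces every other eigenvalue to have modulus $\le\tau_1$. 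Feeding the cylinder-indicators $\mathbf 1_{[s']}$ (whose $\|\cdot\|_\theta$-norm is $\le1+\theta^{-(m-2)}$) into the gap estimate for $\mathcal{L}_{\hat\psi_m}$ yields $(T_m^n)_{ss'}=\mu_{\hat\psi_m}([s'])+O(\theta^{-m}\tau_1^n)$ uniformly, and hence $\bigl|\mu_{\hat\psi_m,n}(\phi)-\mu_{\hat\psi_m}(\phi)\bigr|\le C_3\,s^{m-1}\theta^{-m}\tau_1^n\|\phi\|_\infty$ for $\phi$ constant on $(m-1)$-cylinders.

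Finally I would assemble the bound for a general Lipschitz $\phi$ by inserting its conditional expectation $\phi^{(m)}$ onto $(m-1)$-cylinders and writing $\mu_{\psi,n}(\phi)-\mu_\psi(\phi)$ as a sum of five terms: (I) $\mu_{\psi,n}(\phi)-\mu_{\psi_m,n}(\phi)$, bounded by $(e^{2n|\psi|_\theta\theta^m}-1)\|\phi\|_\infty$ via the elementary comparison of two exponentially-weighted averages whose weights differ by a factor in $[e^{-n\|\psi-\psi_m\|_\infty},e^{n\|\psi-\psi_m\|_\infty}]$; (II) and (IV), the replacements of $\phi$ by $\phi^{(m)}$ inside $\mu_{\hat\psi_m,n}$ and inside $\mu_{\hat\psi_m}$, each at most $\|\phi-\phi^{(m)}\|_\infty\le|\phi|_\theta\theta^{m-1}$; (III), the locally-constant equidistribution bound above applied to $\phi^{(m)}$; and (V) $\mu_{\hat\psi_m}(\phi)-\mu_\psi(\phi)$, bounded by $C\|\phi\|_\theta\theta^{m/2}$ by comparing $\mathcal{L}_{\hat\psi_m}^k\phi$ with $\mathcal{L}_\psi^k\phi$ pointwise (using $|e^{S_k\hat\psi_m}-e^{S_k\psi}|\le(e^{kC_2\theta^m}-1)e^{S_k\psi}$) and optimizing $k\asymp m$. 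Taking the memory $m=\lfloor\epsilon n\rfloor$ with $\epsilon$ small enough that $(s/\theta)^\epsilon\tau_1<1$ makes each of (I)--(V) at most $C\|\phi\|_\theta\tau^n$ with $\tau=\max\{\theta^{\epsilon/2},(s/\theta)^\epsilon\tau_1\}<1$ (and the finitely many small $n$ are absorbed into the constant), proving the theorem. The main obstacle is precisely this balancing: the finite-rank reduction inflates the error by the matrix size $s^{m-1}$ and by $\theta^{-m}$ (the Lipschitz cost of cylinder indicators), so one must let the memory grow only slowly in $n$, and the estimate only closes because the spectral-gap constants are uniform over the whole family $(\hat\psi_m)_{m\ge2}$ rather than degenerating as $m\to\infty$.
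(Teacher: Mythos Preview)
Your argument is correct but follows a genuinely different route from the paper. The paper never approximates $\psi$ by locally constant potentials and never passes to a recoded shift or a matrix trace formula. Instead it writes the periodic-orbit sum exactly as $\sum_{|\underline{i}|=n}\mathcal{L}_\psi^n(\chi_{[\underline{i}]}\phi)(x_{\underline{i}})$, compares this directly with $\sum_{i=1}^{s}\mathcal{L}_\psi^n(\chi_{[i]}\phi)(x_i)$ by telescoping over cylinder depths $m=1,\dots,n$, and bounds each layer using the decomposition $\mathcal{L}_\psi=\mathcal{P}+\mathcal{N}$ together with the bounded-distortion estimate $\sum_{|\underline{i}|=m}|e^{S_m\psi\circ\sigma_{\underline{i}}^{-1}}|_\infty\le Ce^{m(P(\psi)+\epsilon)}$. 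This is shorter, avoids the perturbation step $\|\hat\psi_m-\psi\|_\infty=O(\theta^m)$ for the eigendata, and gives the cleaner rate $\tau\approx\max\{\theta,r\}$ with $r$ the spectral radius of $\mathcal{N}$. Your approach, by contrast, makes the reduction to finite-dimensional Perron--Frobenius theory completely explicit, which is pedagogically attractive and makes the role of the \emph{uniform} gap (over the family $\hat\psi_m$) very transparent; the price is the extra optimization $m=\lfloor\epsilon n\rfloor$ and a strictly weaker exponent, since your $\tau=\max\{\theta^{\epsilon/2},(s/\theta)^{\epsilon}\tau_1\}$ must be taken close to $1$ to control the blow-up $s^{m-1}\theta^{-m}$ coming from the cylinder indicators. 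Both proofs ultimately rest on the same cone contraction, so neither is more elementary at heart; the paper's telescoping device simply sidesteps the discretization entirely.
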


\begin{proof}
By replacing $\phi$ by $\phi-\int\phi d\mu_\psi$, we may assume that $\int\phi d\mu_\psi=0$. We need to show that 
$$
\bigg|\frac{1}{Z_n}\sum_{\sigma^n(x)=x}e^{S_n\psi(x)}\phi(x)\bigg|\leq C||\phi||_\theta\tau^n,
$$
where $Z_n$ is the normalization constant. By \cite{katok_hasselblatt_1995} Proposition 20.3.3, there exists a constant $D>0$ such that
$\frac{1}{D}e^{nP(\psi)}\leq Z_n\leq De^{nP(\psi)}$ (an inspection of the proof reveals that this constant $D$ can be made uniform in Theorem 2.2).
Let $[i]=\{x\in\Sigma_A^+|x_0=i\}$, and for a string
$\underline{i}=(i_0,\cdots,i_{n-1})$ let us denote its length by $|\underline{i}|=n$ and its cylinder set by
$[\underline{i}]=\{x\in\Sigma_A^+|x_0=i_0,\cdots,x_{n-1}\}$. For each $1\leq i\leq s$, fix any point $x_i\in[i]$, and for each string $\underline{i}$, fix a point of period $n$, $x_{\underline{i}}\in [\underline{i}]$, if one exists, and let $x_{\underline{i}}\in [\underline{i}]$ be arbitrary otherwise. We first claim that
$$
\sum_{\sigma^n(x)=x}e^{S_n\psi(x)}\phi(x)=\sum_{|\underline{i}|=n}\mathcal{L}_\psi^n(\chi_{[\underline{i}]}\phi)(x_{\underline{i}}).
$$
To see this, we expand out each term in the right sum:
$$
\mathcal{L}_\psi^n(\chi_{[\underline{i}]}\phi)(x_{\underline{i}})=\sum_{\sigma^n(y)=x_{\underline{i}}}e^{S_n\psi(y)}\chi_{[\underline{i}]}(y)\phi(y)
=e^{S_n\psi(\underline{i}x_{\underline{i}})}\phi(\underline{i}x_{\underline{i}})=e^{S_n\psi(x_{\underline{i}})}\phi(x_{\underline{i}}),
$$
where $\underline{i}x=(i_0,\cdots,i_{n-1},x_0,x_1,\cdots)$ denotes the only inverse branch of $\sigma^n$ that contributes to the sum due to the characteristic function. Since each point of $\fix(\sigma^n)$ lies in a unique cylinder set $[\underline{i}]$, and each such cylinder set contains at most one period-$n$ point, we see that all periodic points are accounted for in the sum over $|\underline{i}|=n$. Consider the estimate
$$
\bigg|\sum_{\sigma^n(x)=x}e^{S_n\psi(x)}\phi(x)\bigg|\leq\bigg| \sum_{\sigma^n(x)=x}e^{S_n\psi(x)}\phi(x)-\sum_{i=1}^s\mathcal{L}_\psi^n(\chi_{[i]}\phi)(x_i)\bigg|+\bigg|\sum_{i=1}^s\mathcal{L}_\psi^n(\chi_{[i]}\phi)(x_i)\bigg|.
$$
To estimate both terms on the right, we will first decompose the transfer operator as a sum of its projection onto the eigenspace of $e^{P(\psi)}$, and the orthogonal projection:
$\mathcal{L}_\psi=\mathcal{P}+\mathcal{N}$, where
$\mathcal{P}(\phi)=e^{P(\psi)}\int\phi d\mu_\psi$, and $\mathcal{N}$ has spectral radius $re^{P(\psi)}$ with $0<r<1$. The second term can thus be estimated as
$$
\bigg|\sum_{i=1}^s\mathcal{L}_\psi^n(\chi_{[i]}\phi)(x_i)\bigg|\leq 
\bigg|\sum_{i=1}^s\bigg[\mathcal{P}^n(\chi_{[i]}\phi)(x_i)+\mathcal{N}^n(\chi_{[i]}\phi)(x_i)\bigg]\bigg|$$
$$
=\bigg|\sum_{i=1}^s\bigg[e^{nP(\psi)}\int_{[i]}\phi d\mu_\psi+\mathcal{N}^n(\chi_{[i]}\phi)(x_i)\bigg]\bigg|
$$
$$
=\bigg|e^{nP(\psi)}\int\phi d\mu_\psi+\sum_{i=1}^s\mathcal{N}^n(\chi_{[i]}\phi)(x_i)\bigg|=\bigg|\sum_{i=1}^s\mathcal{N}^n(\chi_{[i]}\phi)(x_i)\bigg|\leq$$
$$
\sum_{i=1}^s||\mathcal{N}^n||||\chi_{[i]}\phi||_\theta\leq
C(r+\epsilon)^ne^{nP(\psi)}||\phi||_\theta,
$$
where $\epsilon>0$ is arbitrary and $C>0$ depends on $\epsilon$.

It remains to estimate
$$
\bigg| \sum_{\sigma^n(x)=x}e^{S_n\psi(x)}\phi(x)-\sum_{i=1}^s\mathcal{L}_\psi^n(\chi_{[i]}\phi)(x_i)\bigg|
=\bigg| \sum_{|\underline{i}|=n}\mathcal{L}_\psi^n(\chi_{[\underline{i}]}\phi)(x_{\underline{i}})-\sum_{i=1}^s\mathcal{L}_\psi^n(\chi_{[i]}\phi)(x_i)\bigg|
$$
If $\underline{i}=(i_0,\cdots,i_{n-1})$, we let $\underline{j}(\underline{i})=(i_0,\cdots,i_{n-2})$. We now telescope our above series:
$$
 \sum_{|\underline{i}|=n}\mathcal{L}_\psi^n(\chi_{[\underline{i}]}\phi)(x_{\underline{i}})-\sum_{i=1}^s\mathcal{L}_\psi^n(\chi_{[i]}\phi)(x_i)$$
 
 $$=\sum_{m=2}^n\left(\sum_{|\underline{i}|=m}\mathcal{L}_\psi^n(\chi_{[\underline{i}]}\phi)(x_{\underline{i}})
 -\sum_{|\underline{i}|=m-1}\mathcal{L}_\psi^n(\chi_{[\underline{j}]}\phi)(x_{\underline{j}})\right)
$$ 
$$ 
=\sum_{m=2}^n\sum_{|\underline{i}|=m}\left(\mathcal{L}_\psi^n(\chi_{[\underline{i}]}\phi)(x_{\underline{i}})-\mathcal{L}_\psi^n(\chi_{[\underline{i}]}\phi)(x_{\underline{j}(\underline{i})})\right)$$
$$
=\sum_{m=2}^n\sum_{|\underline{i}|=m}\left((\mathcal{P}^n+\mathcal{N}^n)(\chi_{[\underline{i}]}\phi)(x_{\underline{i}})-(\mathcal{P}^n+\mathcal{N}^n)(\chi_{[\underline{i}]}\phi)(x_{\underline{j}(\underline{i})})\right)$$
$$
=\sum_{m=2}^n\sum_{|\underline{i}|=m}\left(\mathcal{N}^n(\chi_{[\underline{i}]}\phi)(x_{\underline{i}})-\mathcal{N}^n(\chi_{[\underline{i}]}\phi)(x_{\underline{j}(\underline{i})})\right).
$$
We now take the absolute value of both sides, and observe that $d_\theta(x_{\underline{i}},x_{\underline{j}(\underline{i})}) =\theta^{m-1}$:
$$
\bigg|\sum_{m=2}^n\sum_{|\underline{i}|=m}\left(\mathcal{N}^n(\chi_{[\underline{i}]}\phi)(x_{\underline{i}})-\mathcal{N}^n(\chi_{[\underline{i}]}\phi)(x_{\underline{j}(\underline{i})})\right)\bigg|\leq$$
$$
\sum_{m=2}^n\sum_{|\underline{i}|=m}||\mathcal{N}^n\chi_{[\underline{i}]}\phi||_\theta\theta^{m-1}\leq\sum_{m=2}^n||\mathcal{N}^{n-m}||\sum_{|\underline{i}|=m}||\mathcal{L}_\psi^m\chi_{[\underline{i}]}\phi||_\theta\theta^{m-1}.
$$
In this last inequality we made use of the fact that $\mathcal{N}\mathcal{P}=0$ to get the bound
$$||\mathcal{N}^n(\chi_{[\underline{i}]}\phi)||_\theta=||\mathcal{N}^{n-m}(\mathcal{N}^{m}(\chi_{[\underline{i}]}\phi)+\mathcal{P}^{m}(\chi_{[\underline{i}]}\phi))||_\theta=
$$
$$
||\mathcal{N}^{n-m}(\mathcal{L}_\psi^{m}(\chi_{[\underline{i}]}\phi))||_\theta\leq ||\mathcal{N}^{n-m}||||\mathcal{L}_\psi^{m}(\chi_{[\underline{i}]}\phi))||_\theta.$$ 
We next estimate the term
$||\mathcal{L}_\psi^{m}(\chi_{[\underline{i}]}\phi))||_\theta=||e^{(S_m\psi)\circ\sigma_{\underline{i}}^{-1}}(\phi\circ\sigma_{\underline{i}}^{-1})||_\theta$, where $\sigma_{\underline{i}}^{-1}(x)=\underline{i}x$ is an inverse branch of $\sigma^n$:
$$
||e^{(S_m\psi)\circ\sigma_{\underline{i}}^{-1}}(\phi\circ\sigma_{\underline{i}}^{-1})||_\theta=
|e^{(S_m\psi)\circ\sigma_{\underline{i}}^{-1}}(\phi\circ\sigma_{\underline{i}}^{-1})|_{\infty}+|e^{(S_m\psi)\circ\sigma_{\underline{i}}^{-1}}(\phi\circ\sigma_{\underline{i}}^{-1})|_\theta\leq
$$
$$
|e^{(S_m\psi)\circ\sigma_{\underline{i}}^{-1}}|_{\infty}|\phi|_{\infty}+|e^{(S_m\psi)\circ\sigma_{\underline{i}}^{-1}}|_{\infty}|(\phi\circ\sigma_{\underline{i}}^{-1})|_\theta+
|e^{(S_m\psi)\circ\sigma_{\underline{i}}^{-1}}|_\theta|(\phi\circ\sigma_{\underline{i}}^{-1})|_{\infty}\leq
$$
$$
|e^{(S_m\psi)\circ\sigma_{\underline{i}}^{-1}}|_{\infty}|\phi|_{\infty}+|e^{(S_m\psi)\circ\sigma_{\underline{i}}^{-1}}|_{\infty}|(\phi\circ\sigma_{\underline{i}}^{-1})|_\theta+
|e^{(S_m\psi)\circ\sigma_{\underline{i}}^{-1}}|_{\infty}|S_m\psi\circ\sigma_{\underline{i}}^{-1}|_\theta|(\phi\circ\sigma_{\underline{i}}^{-1})|_{\infty}=
$$
$$
|e^{(S_m\psi)\circ\sigma_{\underline{i}}^{-1}}|_{\infty} \left(
|\phi|_{\infty}+|\phi\circ\sigma_{\underline{i}}^{-1}|_\theta
+|S_m\psi\circ\sigma_{\underline{i}}^{-1}|_\theta|(\phi\circ\sigma_{\underline{i}}^{-1})|_{\infty}\right)\leq
$$
$$
|e^{(S_m\psi)\circ\sigma_{\underline{i}}^{-1}}|_{\infty} \left(
||\phi||_{\theta}+|\phi\circ\sigma_{\underline{i}}^{-1}|_\theta
+|S_m\psi\circ\sigma_{\underline{i}}^{-1}|_\theta||\phi\||_{\theta}\right).
$$
To calculate the seminorms $|\phi\circ\sigma_{\underline{i}}^{-1}|_\theta$ and
$|S_m\psi\circ\sigma_{\underline{i}}^{-1}|_\theta$, we use the fact that $\sigma_{\underline{i}}^{-1}$ is a $\theta^m$-contraction:
$$
|S_m\psi\circ\sigma_{\underline{i}}^{-1}|_\theta=
\sup_{x\neq y}\frac{|S_m\psi\circ\sigma_{\underline{i}}^{-1}(x)-S_m\psi\circ\sigma_{\underline{i}}^{-1}(y)|}{d_\theta(x,y)}\leq$$
$$
\sum_{i=0}^{m-1}\frac{|\psi(\sigma^i(\sigma_{\underline{i}}^{-1}(x)))-\psi(\sigma^i(\sigma_{\underline{i}}^{-1}(y)))|}{d_\theta(x,y)}\leq
\sum_{i=0}^{m-1}|\psi|_\theta \theta^{m-i}\leq\frac{1}{1-\theta}|\psi|_\theta.
$$
A similar (and simpler) calculation shows that 
$|\phi\circ\sigma_{\underline{i}}^{-1}|_\theta\leq\theta^m||\phi||_\theta$.
Putting this all together, we find that
$$||\mathcal{L}_\psi^m\chi_{[\underline{i}]}\phi||_\theta\theta^{m-1}\leq
|e^{(S_m\psi)\circ\sigma_{\underline{i}}^{-1}}|_{\infty}\left(
||\phi||_\theta\theta^{m-1}+||\phi||_\theta\theta^{2m-1}+
\frac{|\psi|_\theta}{1-\theta}||\phi||_\theta\theta^m\right)$$
$$
\leq C'|e^{(S_m\psi)\circ\sigma_{\underline{i}}^{-1}}|_{\infty}||\phi||_\theta\theta^m,
$$
where $C'=\max\{1,\frac{|\psi|_\theta}{1-\theta}\}$. Thus, using the spectral radius bound for $\mathcal{N}$, we find that
$$
\sum_{m=2}^n||\mathcal{N}^{n-m}||\sum_{|\underline{i}|=m}||\mathcal{L}_\psi^m\chi_{[\underline{i}]}\phi||_\theta\theta^{m-1}$$
$$
\leq
\sum_{m=2}^nC(r+\epsilon)^{n-m}e^{(n-m)P(\psi)}C'\theta^m||\phi||_\theta\sum_{|\underline{i}|=m}|e^{(S_m\psi)\circ\sigma_{\underline{i}}^{-1}}|_{\infty}$$
$$
\leq C''||\phi||_\theta\kappa^n\sum_{m=2}^ne^{(n-m)P(\psi)}\sum_{|\underline{i}|=m}|e^{(S_m\psi)\circ\sigma_{\underline{i}}^{-1}}|_{\infty},
$$
where $\kappa=\max\{\theta,r+\epsilon\}<1$.
We have seen previously that $|S_m\psi\circ\sigma_{\underline{i}}^{-1}(x)-S_m\psi\circ\sigma_{\underline{i}}^{-1}(y)|\leq\frac{|\psi|_\theta}{1-\theta}d_\theta(x,y)\leq K<\infty$ (since the diameter of $\Sigma_A^+$ is finite). Taking the exponential of both sides, we obtain the following bounded distortion estimate:
$$
e^{(S_m\psi)\circ\sigma_{\underline{i}}^{-1}(x)}\leq Ce^{(S_m\psi)\circ\sigma_{\underline{i}}^{-1}(y)},
$$
for any $x,y$ in the domain of $\sigma_{\underline{i}}^{-1}$. Notice that the domain of $\sigma_{\underline{i}}^{-1}$ is completely determined by the last symbol in the string $\underline{i}$. For each $\underline{i}$, let $y_{\underline{i}}$ be such that $e^{(S_m\psi)\circ\sigma_{\underline{i}}^{-1}(y_{\underline{i}})}=|e^{(S_m\psi)\circ\sigma_{\underline{i}}^{-1}}|_{\infty}$, and let $z_{\underline{i}}$ be any point in the domain of  $\sigma_{\underline{i}}^{-1}$ that only depends on the last symbol $i_{m}$.
Then
$$
\sum_{|\underline{i}|=m}|e^{(S_m\psi)\circ\sigma_{\underline{i}}^{-1}}|_{\infty}\leq C\sum_{|\underline{i}|=m}e^{(S_m\psi)\circ\sigma_{\underline{i}}^{-1}(z_{\underline{i}})}=C\sum_{i_m=1}^s\mathcal{L}_\psi^m1(z_{\underline{i}})\leq Cs||\mathcal{L}_\psi^m 1||_\theta\leq Ce^{m(P(\psi)+\epsilon)}.
$$
Therefore,
$$
=\bigg| \sum_{|\underline{i}|=n}\mathcal{L}_\psi^n(\chi_{[\underline{i}]}\phi)(x_{\underline{i}})-\sum_{i=1}^s\mathcal{L}_\psi^n(\chi_{[i]}\phi)(x_i)\bigg|\leq
C''||\phi||_\theta\kappa^n\sum_{m=2}^ne^{(n-m)P(\psi)}\sum_{|\underline{i}|=m}|e^{(S_m\psi)\circ\sigma_{\underline{i}}^{-1}}|_{\infty}\leq$$
$$
C'''||\phi||_\theta\kappa^ne^{n(P(\psi)+\epsilon)}(n-2)\leq
C'''||\phi||_\theta(\kappa+\epsilon)^ne^{n(P(\psi)+\epsilon)}.
$$
We have shown that
$$
\bigg|\sum_{\sigma^n(x)=x}e^{S_n\psi(x)}\phi(x)\bigg|\leq
C||\phi||_\theta(\kappa+\epsilon)^ne^{n(P(\psi)+\epsilon)}
$$
with $\kappa+\epsilon<1$. Finally,
$$
\bigg|\frac{1}{Z_n}\sum_{\sigma^n(x)=x}e^{S_n\psi(x)}\phi(x)\bigg|\leq De^{-nP(\psi)}C||\phi||_\theta(\kappa+\epsilon)^ne^{n(P(\psi)+\epsilon)}\leq C||\phi||_\theta(\kappa+\epsilon)^ne^{n\epsilon}.
$$
Finally, we let $\tau=(\kappa+\epsilon)e^\epsilon<1$, for sufficiently small $\epsilon>0$.
\end{proof}
Using standard arguments, it is easy to deduce the same equidisribution result for two-sided shifts:
\begin{cor}
Let $(\Sigma_A,\sigma_A)$ be a subshift of finite type, where the transition matrix $A$ is irreducible and aperiodic, and let $\psi\in\mathcal{F}_\theta$ be a Lipschitz continuous potential. Then there exists constants $C>0$ and $0<\tau<1$ such that for any $\phi\in\mathcal{F}_\theta$ and all $n\in\mathbb{N}$, 

$$
\bigg|\int \phi d\mu_{\psi,n}-\int \phi d\mu_\psi\bigg|\leq
C||\phi||_\theta\tau^n,
$$
where $\mu_\phi$ is the unique equilibrium state of $\psi$.
\end{cor}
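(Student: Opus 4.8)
The plan is to deduce the two-sided statement from the one-sided Theorem 3.1 by the classical device of replacing a Lipschitz function on $\Sigma_A$ with a cohomologous function that depends only on the forward coordinates. Write $\pi:\Sigma_A\to\Sigma_A^+$ for the projection onto nonnegative coordinates. Recall Sinai's lemma (see \cite{Parry_Pollicott}, Proposition 1.2, or \cite{Bowen_1974}): for any $g\in\mathcal{F}_\theta$ there exist $\tilde g\in\mathcal{F}_{\sqrt\theta}^+$ and a transfer function $u\in\mathcal{F}_{\sqrt\theta}$ such that $g=\tilde g\circ\pi+u\circ\sigma_A-u$, with $\|\tilde g\|_{\sqrt\theta}$ and $\|u\|_{\sqrt\theta}$ bounded by a constant depending only on $\theta$ times $\|g\|_\theta$. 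First I would apply this to both $\psi$ and $\phi$, obtaining $\tilde\psi,\tilde\phi\in\mathcal{F}_{\sqrt\theta}^+$ and transfer functions $u_\psi,u_\phi\in\mathcal{F}_{\sqrt\theta}$.

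The next step is to check that neither coboundary affects the quantities of interest. Since $\mu_\psi$ is $\sigma_A$-invariant, $\int(u_\phi\circ\sigma_A-u_\phi)\,d\mu_\psi=0$, so $\int\phi\,d\mu_\psi=\int\tilde\phi\circ\pi\,d\mu_\psi$. For the weighted periodic measures, note that for any $x\in\fix(\sigma_A^n)$ the Birkhoff sum of a coboundary telescopes: $S_n(u\circ\sigma_A-u)(x)=u(\sigma_A^n x)-u(x)=0$. Applying this with $u=u_\psi$ gives $S_n\psi(x)=S_n(\tilde\psi\circ\pi)(x)$ on $\fix(\sigma_A^n)$, hence $Z_n(\psi)=Z_n(\tilde\psi\circ\pi)$ and the weights of $\mu_{\psi,n}$ depend on $x$ only through $\pi(x)$; applying it with $u=u_\phi$ gives $\int\phi\,d\mu_{\psi,n}=\int\tilde\phi\circ\pi\,d\mu_{\psi,n}$.

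Now I would transport everything to the one-sided shift. The map $\pi$ restricts to a bijection $\fix(\sigma_A^n)\to\fix((\sigma_A^+)^n)$, and since the weights of $\mu_{\psi,n}$ factor through $\pi$ we obtain $\pi_*\mu_{\psi,n}=\mu_{\tilde\psi,n}$, the corresponding weighted periodic measure on $\Sigma_A^+$; likewise, because equilibrium states are invariant under cohomology and the pushforward of a two-sided equilibrium state along $\pi$ is the one-sided equilibrium state of the future potential, $\pi_*\mu_\psi=\mu_{\tilde\psi}$ (see \cite{Parry_Pollicott} or \cite{Baladi}). Therefore
$$\int\phi\,d\mu_{\psi,n}-\int\phi\,d\mu_\psi=\int\tilde\phi\,d\mu_{\tilde\psi,n}-\int\tilde\phi\,d\mu_{\tilde\psi}.$$
Finally I would apply Theorem 3.1 on $(\Sigma_A^+,\sigma_A^+)$ with metric parameter $\sqrt\theta$, potential $\tilde\psi\in\mathcal{F}_{\sqrt\theta}^+$, and test function $\tilde\phi\in\mathcal{F}_{\sqrt\theta}^+$, which bounds the right-hand side by $C_0\|\tilde\phi\|_{\sqrt\theta}\tau_0^n$ for constants $C_0>0$, $0<\tau_0<1$; combining with $\|\tilde\phi\|_{\sqrt\theta}\leq C_1\|\phi\|_\theta$ from Sinai's lemma and setting $C=C_0C_1$, $\tau=\tau_0$ yields the claim. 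The only real point requiring care is the bookkeeping around the change of metric parameter $\theta\mapsto\sqrt\theta$ forced by Sinai's lemma, but since Theorem 3.1 is valid for every $0<\theta<1$ this presents no obstacle and all constants remain uniform.
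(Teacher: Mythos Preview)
Your proposal is correct and follows essentially the same approach as the paper: both reduce to the one-sided case via Sinai's lemma (the paper cites it as \cite{Baladi} Lemma 1.3), replacing $\psi$ and $\phi$ by cohomologous functions in $\mathcal{F}_{\sqrt\theta}^+$, noting that coboundaries leave both the equilibrium state and the weighted periodic sums unchanged, applying Theorem 3.1 with parameter $\sqrt\theta$, and then using the continuity of $\phi\mapsto\tilde\phi$ to absorb the norm constant. Your write-up is somewhat more explicit about the telescoping on periodic orbits and the bijection $\pi:\fix(\sigma_A^n)\to\fix((\sigma_A^+)^n)$, but the argument is the same.
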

\begin{proof}
By \cite{Baladi} Lemma 1.3, there exists functions $\psi^+,\phi^+\in\mathcal{F}_{\theta^{1/2}}$ which depend only on future coordinates and are cohomologous to $\psi$ and $\phi$, respectively. Therefore we may regard $\psi^+$ and $\phi^+$ as functions in $\mathcal{F}_{\theta^{1/2}}^+$.
Since they are cohomologous, $\psi$ and $\psi^+$ have the same equilibrium state, which may be regarded as a measure on $\Sigma_A^+$, and the dynamical sums agree on periodic points ($S_n\psi(x)=S_n\psi^+(x)$) so that $\mu_{\psi,n}=\mu_{\psi^+,n}$. Therefore,
$$
\bigg|\int \phi d\mu_{\psi,n}-\int \phi d\mu_\psi\bigg|=
\bigg|\int \phi^+ d\mu_{\psi^+,n}-\int \phi^+ d\mu_\psi^+\bigg|\leq
C||\phi^+||_{\theta^{1/2}}\tau^n.
$$
Moreover, the map $\phi\mapsto\phi^+$ is a continuous, linear mapping from $\mathcal{F}_\theta\rightarrow\mathcal{F}_{\theta^{1/2}}^+$, so $||\phi^+||_{\theta^{1/2}}\leq C'||\phi||_\theta$. Absorbing the constant $C'$ into $C$ gives the desired result.
\end{proof}
To pass to the proof of Theorem 2.2 we will use Markov partitions. 
%See [Katok-Hassleblatt] section 18.7 for the relevant definitions. 
It is well known that repellers and Axiom A diffeomorphisms admit finite Markov partitions; see \cite{10.5555/1875355} Theorem 3.5.2 in the case of expanding maps and \cite{Bowen-1970} for the case of Axiom A diffeomorhisms. 
More precisely, if $(J,T)$ is a repeller, or $(\Omega(T),T)$ is an Axiom A diffeomorphism, then there exists a subshift of finite type $(\Sigma_A^+,\sigma)$ and a semiconjugacy $\pi:\Sigma_A^+\rightarrow J$ (respectively $(\Sigma_A,\sigma)$ for $(\Omega(T),T)$). If the transformation $T$ is mixing, then the transition matrix $A$ is irreducible and aperiodic. For an appropriately chosen $\theta$, Lipschitz functions $f$ defined on $J$ or $\Omega(T)$ can be lifted to a Lipschitz function $f\circ\pi$. We proceed to prove Theorem 2.2 in the case of one dimensional expanding maps, where the passage to a Markov partition is simplest. However, in order to obtain a uniform effective equidistribution for expanding maps, we need uniform effective equidistribution of corresponding lifted systems in Theorem 3.1. Uniformity is lost in Theorem 3.1 when we apply the spectral radius formula to get bounds on $||\mathcal{N}^n||_\theta$. However, for the particular systems we are considering, we can obtain uniform bounds on $||\mathcal{N}^n||_\theta$ by using the Birkhoff cone technique, adapted to subshifts of finite type by Naud in \cite{Frédéric}.
\begin{lemma}
 There exists $C_{\mathcal{W}}>0$ and $0<\tau_{\mathcal{W}}<1$ such that the following is true:
 Given $f\in\mathcal{W}$, let $\pi_f$ be the associated semiconjugacy to the full shift on $\deg(f)$-symbols, and let $\psi_f=-\log(f'\circ\pi_f)$. If $\mathcal{L}_{\psi_f}h_f= e^{P(\psi_f)}h_f$, we consider the normalized potential $\overline{\psi}_f=\psi_f+\log(h_f)-\log(h_f\circ\sigma)$. Then for all $n\in\mathbb{N}$, $||\mathcal{N}_{\overline{\psi}_f}^n||_\theta< C_{\mathcal{W}}\tau_{\mathcal{W}}^ne^{nP(\overline{\psi}_f)}$.
\end{lemma}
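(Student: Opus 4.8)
The plan is to prove the lemma by the Birkhoff cone method for subshifts of finite type (Naud \cite{Frédéric}), verifying that every constant that method produces can be taken independent of $f\in\mathcal{W}$. The decisive structural remark is that every $f\in\mathcal{W}$ is conjugate to $g$, hence has the fixed degree $d=\deg g$, so the symbolic model is in each case the \emph{full} one-sided shift on $d$ symbols: the transition matrix, the cylinder combinatorics, and the metric $d_\theta$ (we fix once and for all some $\theta$ with $\gamma^{-1}\le\theta<1$) are the same for all $f$. Consequently the only $f$-dependence is carried by the normalized potential $\overline{\psi}_f$, and it suffices to produce $f$-uniform bounds on the analytic data of $\overline{\psi}_f$ and then feed these into the (otherwise formal) cone estimates.

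First I would establish the a priori bounds. Since $d_{C^2}(f,g)<C_0$ and $|f'|\ge\gamma>1$ on $\mathcal{W}$, the norms $\|f'\|_{\infty}$ and $\|f''\|_{\infty}$ are uniformly bounded; a depth-$n$ cylinder of $\pi_f$ has diameter $\le\gamma^{-n}$, so the mean value theorem together with the choice $\theta\ge\gamma^{-1}$ gives uniform bounds $\|\psi_f\|_\infty\le L_0$ and $|\psi_f|_\theta\le L_0$. Next, for a suitable $b_0$ depending only on $L_0$ and $\theta$, the same computation that appears in the proof of Theorem 3.1 shows that the cone $\mathcal{C}_{b_0}:=\{\phi\in\mathcal{F}_\theta^+:\phi>0,\ \phi(x)\le e^{b_0 d_\theta(x,y)}\phi(y)\ \forall x,y\}$ is forward invariant under $e^{-P(\psi_f)}\mathcal{L}_{\psi_f}$ and contains the constant function $1$; since the Ruelle--Perron--Frobenius theorem identifies $h_f$ (suitably normalized, say $\|h_f\|_\infty=1$) as the limit of $(e^{-P(\psi_f)}\mathcal{L}_{\psi_f})^n 1$, we conclude $h_f\in\overline{\mathcal{C}_{b_0}}$, i.e. $|\log h_f|_\theta\le b_0$, and hence, using $\diam(\Sigma_A^+)=1$, the uniform two-sided bound $e^{-b_0}\le h_f\le 1$. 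Therefore $\overline{\psi}_f=\psi_f+\log h_f-\log h_f\circ\sigma$ obeys uniform bounds $\|\overline{\psi}_f\|_\infty\le L_1$, $|\overline{\psi}_f|_\theta\le L_1$, and a uniform bounded-distortion estimate for the sums $S_m\overline{\psi}_f\circ\sigma_{\underline i}^{-1}$, exactly as in Theorem 3.1.

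With these uniform constants I would run the cone contraction. For the full shift, prepending a symbol multiplies $d_\theta$ by exactly $\theta$, so a direct computation gives $e^{-P(\overline{\psi}_f)}\mathcal{L}_{\overline{\psi}_f}(\mathcal{C}_b)\subset\mathcal{C}_{\theta(b+L_1)}$ for every $b>0$, and choosing $b$ large (again depending only on $L_1$ and $\theta$) makes $b':=\theta(b+L_1)<b$. By Naud's lemma the image then has finite Hilbert projective diameter $\Delta<\infty$, with $\Delta$ a function of $b,b',\theta$ only, hence uniform over $\mathcal{W}$; Birkhoff's theorem makes $e^{-P(\overline{\psi}_f)}\mathcal{L}_{\overline{\psi}_f}$ a contraction of the Hilbert metric on $\mathcal{C}_b$ by the uniform factor $\eta:=\tanh(\Delta/4)<1$. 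Finally I would convert this into the stated operator-norm bound: on the slice $\{\phi\in\mathcal{C}_b:\int\phi\,d\mu_{\overline{\psi}_f}=1\}$ the Hilbert distance dominates $\|\cdot\|_\theta$ up to a constant depending only on $b$ and the lower bound $e^{-b_0}$ for $h_f$ (hence uniform), a general $\phi\in\mathcal{F}_\theta^+$ is a difference of two such elements after adding a multiple of $1$, and since $\mathcal{L}_{\overline{\psi}_f}^n\phi=e^{nP(\overline{\psi}_f)}\big(\int\phi\,d\mu_{\overline{\psi}_f}\big)\cdot 1+\mathcal{N}_{\overline{\psi}_f}^n\phi$, the Hilbert-metric iteration yields exactly $\|\mathcal{N}_{\overline{\psi}_f}^n\|_\theta\le C_{\mathcal{W}}\,\eta^{\,n}e^{nP(\overline{\psi}_f)}$, which is the assertion with $\tau_{\mathcal{W}}=\eta$.

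The heart of the matter — and the main thing to get right — is uniformity: every estimate above must be traced back to the fixed data $(\gamma,C_0,\theta,d)$ of the family $\mathcal{W}$. The only genuinely non-elementary input is Naud's finite-diameter lemma for Lipschitz cones on subshifts of finite type, which we invoke verbatim; the slightly delicate elementary point is realizing the leading eigenfunction $h_f$ inside a \emph{fixed} cone $\mathcal{C}_{b_0}$ rather than one depending on $f$, which is what the a priori computation in the second paragraph accomplishes. Everything else — the forward invariance of $\mathcal{C}_{b_0}$, the strict aperture contraction $\mathcal{C}_b\to\mathcal{C}_{b'}$, and the Hilbert-versus-$\|\cdot\|_\theta$ comparison — is bookkeeping whose constants are visibly independent of $f$.
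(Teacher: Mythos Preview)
Your proposal is correct and follows essentially the same approach as the paper's proof in the appendix: Birkhoff cones for the full shift, uniform a~priori bounds on $\psi_f$ and $h_f$ (hence on $\overline{\psi}_f$) from the data $(\gamma,C_0,\theta,d)$ of $\mathcal{W}$, the cone contraction $\mathcal{C}_L\to\mathcal{C}_{\xi L}$ with uniform finite Hilbert diameter via Naud's lemma, and then Lemma~A.1 (the Hilbert-versus-norm comparison) plus the usual decomposition to pass from cone elements to all of $\mathcal{F}_\theta^+$. The only cosmetic difference is that the paper restricts the log-Lipschitz cone condition to pairs with $d_\theta(x,y)\le\theta$ whereas you impose it for all $x,y$; on the full shift both versions lead to the same estimates.
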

We review the Birkhoff cone construction and proof Lemma 3.1 in the appendix.

\begin{proof}[Proof of Theorem 2.2]
Consider a degree $k$ expanding map $f:S^1\rightarrow S^1$. Then the Markov partition of $(S^1,f)$ consists of $k$ closed intervals which are each mapped under $f$ to the entirety of $S^1$. Consequentially, $(S^1,f)$ is semiconjugated to the full shift on $k$ symbols, $(\Sigma^+,\sigma)$. Let $\phi:S^1\rightarrow\mathbb{C}$ be a Lipschitz function. We can then lift $\phi$ to a Lipschitz function $f\circ\pi:\Sigma^+\rightarrow\mathbb{C}$. Likewise, let $\psi:S^1\rightarrow\mathbb{C}$ be a H\"older continuous potential with equilibrium state $\mu_\psi$, and let
$\psi\circ\pi:\Sigma^+\rightarrow\mathbb{C}$ be the lifted potential with equilibrium state $\mu_{\psi\circ\pi}$.
We claim that the pushforward of $\mu_{\psi\circ\pi}$ under $\pi$ is $\mu_\psi$, i.e.,  $\pi_*\mu_{\psi\circ\pi}=\mu_\psi$.
Unfortunately, it is not true that $\pi_*\mu_{\psi\circ\pi}^n=\mu_\psi^n$. since the semiconjugacy is not injective, we may have multiple distinct periodic oribits for $(\Sigma^+,\sigma)$ get mapped under $\pi$ to the same periodic orbit of $(S^1,f)$.

We know that $|\fix(\sigma^n)|=k^n$ and $|\fix(f^n)|=k^n-1$. Moreover, distinct periodic orbits of $f$ can be lifted to distinct periodic orbits of $\sigma$ with the same period, so we have that for each $n$, only two distinct points of $\Sigma^+$ of period $n$ get mapped under $\pi$ to the same point. Let $\nu_{\psi\circ\pi}^n$ be any measure on $\Sigma^+$ such that $\pi^*\nu_{\psi\circ\pi}^n=\mu_{\psi}^n$. Then

$$
\bigg|\int\phi d\mu_\psi^n-\int\phi d\mu_\psi\bigg|=
\bigg|\int\phi\circ\pi d\nu_{\psi\circ\pi}^n-\int\phi\circ\pi d\mu_{\psi\circ\pi}\bigg|$$
$$
\leq\bigg|\int\phi\circ\pi d\mu_{\psi\circ\pi}^n-\int\phi\circ\pi d\mu_{\psi\circ\pi}\bigg|+
\bigg|\int\phi\circ\pi d\nu_{\psi\circ\pi}^n-\int\phi\circ\pi d\mu_{\psi\circ\pi}^n\bigg|$$
To estimate the last term, we write
$$
\nu_{\psi\circ\pi}^n=\frac{1}{Z_n(\psi)}\sum_{x\in A_n} e^{S_n(\psi)(\pi(x))}\delta_x,$$
where $A_n$ is any subset of $\fix(\sigma^n)$ that is mapped bijectively to $\fix(f^n)$, and $Z_n({\psi})$ is a normalization constant. Then if we let $y\in\fix(\sigma^n)/A_n$ we have
$$
\bigg|\int\phi\circ\pi d\nu_{\psi\circ\pi}^n-\int\phi\circ\pi d\mu_{\psi\circ\pi}^n\bigg|=
$$
$$
\bigg|\frac{1}{Z_n(\psi)}\sum_{x\in A_n} e^{S_n(\psi)(\pi(x))}\phi(\pi(x))-\frac{1}{Z_n(\psi\circ\pi)}\sum_{x\in \fix(\sigma^n)} e^{S_n(\psi)(\pi(x))}\phi(\pi(x))\bigg|\leq
$$
$$
\bigg|\left(\frac{1}{Z_n(\psi)}-\frac{1}{Z_n(\psi\circ\pi)}\right)\sum_{x\in \fix(\sigma^n)} e^{S_n(\psi)(\pi(x))}\phi(\pi(x))\bigg|$$

$$+
\frac{1}{Z_n(\psi)}\bigg|\sum_{x\in A_n} e^{S_n(\psi)(\pi(x))}\phi(\pi(x))-\sum_{x\in \fix(\sigma^n)} e^{S_n(\psi)(\pi(x))}\phi(\pi(x))\bigg|
$$
$$
\leq \left(\frac{Z_n(\psi\circ\pi)}{Z_n(\psi)}-1\right)||\phi\circ\pi||_{\infty}+\frac{1}{Z_n(\psi)}e^{S_n(\phi)(\pi(y))}|\phi(\pi(y))|\leq
$$
$$
\left(\frac{Z_n(\psi)+e^{S_n(\phi)(\pi(y))}}{Z_n(\psi)}-1\right)||\phi\circ\pi||_{\infty}+\frac{1}{Z_n(\psi)}e^{S_n(\phi)(\pi(y))}|\phi(\pi(y))|\leq
$$
$$
\frac{2}{Z_n(\psi)}e^{S_n(\phi)(\pi(y))}||\phi\circ\pi||_{\theta}\leq
D||\phi\circ\pi||_{\theta}e^{S_n(\psi)(\pi(y))}e^{-nP(\psi)}.
$$
In the case of expanding maps we have that $P(\psi_f)=0$ and $S_n(\psi_f)(\pi(y))\leq -n\log\lambda_f$, where $\lambda_f>1$ is the expansion constant for $f$, that is, $|f'(x)|\geq\lambda_f$ for all $x\in S^1$.
This proves that 
$$
\bigg|\int\phi d\mu_\psi^N-\int\phi d\mu_\psi\bigg|\leq C'||\phi\circ\pi||_{\theta}\tau^N
$$
For some $0<\tau<1$ (not necessarily the same $\tau$ as in Theorem 3.1).
Clearly $||\phi\circ\pi||_{\infty}\leq||\phi||_{\infty}$. Moreover

$$|\phi\circ\pi|_{\theta}=\sup_{x\neq y}\frac{|\phi(\pi(x)-\phi(\pi(y))|}{d_\theta(x,y)}\leq |\phi|_{Lip}\frac{d(\pi(x),\pi(y))}{d_\theta(x,y)}\leq |\pi|_\theta|\phi|_{Lip},$$
so that $||\phi\circ\pi||_\theta\leq \max\{1,|\pi|_{\theta}\}||\phi||_{Lip}.$ Thus, absorbing all constants into $C$, we have 
$$\bigg|\int\phi d\mu_\psi^n-\int\phi d\mu_\psi\bigg|\leq C||\phi||_{Lip}\tau^N,
$$
as desired.

\end{proof}
\begin{remark}
   One can prove effective equidistribution for general expanding repellers and Axiom A diffeomorphism using Markov partitions in the same way, but more care is needed to handle the difference between the measure $\nu_{\psi\circ\pi}^n$ and $\mu_{\psi\circ\pi}^n$. 
\end{remark}

\appendix
\section{Birkhoff Cones for Subshifts of Finite Type}
Rather than deducing our desired bounds on $||\mathcal{N}_{\psi_f}^n||_\theta$ as a consequence of quasi-compactness of the transfer operator (the standard approach of the Ruelle-Perron-Frobenius theorem), we use the technique of Birkhoff cones. The idea is to show that the transfer operator contracts a certain cone of Lipschitz functions with respect to a ``pseudo-metric" and to then establish the leading eigenfunction as a fixed point with respect to this pseudo-metric. The benefit to this approach is that we can establish explicit bounds on $||\mathcal{N}_{\psi_f}^n||_\theta$ which will be uniform in our set $\mathcal{W}$. Then one can actually deduce quasi-compactness as a consequence of this bound. This approach is standard for uniformly expanding maps; see \cite{Baladi} section  2.2. For subshifts of finite type, we follow \cite{Frédéric} closely, applied to the specific case of the full shift that we need, and for which certain technical difficulties vanish.
\begin{definition}
    A subset $\Lambda\subset \mathcal{B}/\{0\}$ of a Banach space $\mathcal{B}$ is called a cone if $\lambda\phi\in\Lambda$ for all $\phi\in\Lambda$ and all $\lambda>0$. The cone is called closed if $\Lambda\cup \{0\}$ is closed, and $\Lambda$ is called convex if $\psi_1+\psi_2\in\Lambda$ for every $\psi_1,\psi_2\in\Lambda$. A cone $\Lambda$ induces a partial order $\leq_\Lambda$ on $\mathcal{B}$ by defining
    $\psi\leq_\Lambda\phi \iff \phi-\psi\in\Lambda\cup\{0\}$.
\end{definition}
\begin{definition}
    For $\psi$ and $\phi$ in a cone $\Lambda$, define
    $$
    \alpha(\phi,\psi)=\sup\{\lambda>0 \hspace{1mm} | \hspace{1mm} \lambda\phi\leq_\Lambda\psi\},\hspace{1mm} \beta(\phi,\psi)=\inf\{\lambda>0\hspace{1mm} |\hspace{1mm} \psi\leq_\Lambda\lambda\phi \}.
    $$
    Then we define the Hilbert pseudo-metric $\Theta_\Lambda$ on $\lambda$ by
    $$
    \Theta_\Lambda(\phi,\psi)=\log\frac{\beta(\phi,\psi)}{\alpha(\phi,\psi)}.
    $$
\end{definition}
\begin{theorem}[Birkhoff's Inequality]
    Let $\Lambda$ be a convex cone in a Banach space $\mathcal{B}$. If $T:\mathcal{B}\rightarrow\mathcal{B}$ is a linear operator such that $T(\Lambda)\subset\Lambda$, then for each $\phi,\psi\in\Lambda$ we have
    $$
    \Theta_\Lambda(T\phi,T\psi)\leq \tanh\left({\frac{\diam_{\Theta_\Lambda}(T\Lambda)}{4}}\right)\Theta_\Lambda(\phi,\psi).
    $$
\end{theorem}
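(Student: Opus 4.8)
The plan is to carry out the classical Birkhoff contraction argument for the projective metric, specialized to the situation at hand. \textbf{Degenerate cases first.} If $\Theta_\Lambda(\phi,\psi)=0$ then $\phi$ and $\psi$ lie on a common ray, hence so do $T\phi$ and $T\psi$, and the left-hand side is $0$; if $D:=\diam_{\Theta_\Lambda}(T\Lambda)=\infty$ then $\tanh(D/4)=1$ and the claim reduces to $\Theta_\Lambda(T\phi,T\psi)\leq\Theta_\Lambda(\phi,\psi)$. This last inequality is the \emph{monotonicity} of $\Theta_\Lambda$ under $T$: whenever $\lambda\phi\leq_\Lambda\psi\leq_\Lambda\mu\phi$ we may apply $T$ (which preserves $\leq_\Lambda$ because $T(\Lambda)\subset\Lambda$ and $\Lambda$ is convex) to get $\lambda T\phi\leq_\Lambda T\psi\leq_\Lambda\mu T\phi$, whence $\alpha(T\phi,T\psi)\geq\alpha(\phi,\psi)$ and $\beta(T\phi,T\psi)\leq\beta(\phi,\psi)$. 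So from now on assume $0<\Theta_\Lambda(\phi,\psi)<\infty$ and $D<\infty$, so that $0<\alpha:=\alpha(\phi,\psi)<\beta:=\beta(\phi,\psi)<\infty$.

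\textbf{Reduction to a scalar inequality.} Put $u:=\psi-\alpha\phi$ and $v:=\beta\phi-\psi$; up to replacing $\alpha,\beta$ by $\alpha-\varepsilon,\beta+\varepsilon$ and letting $\varepsilon\to0$ at the end (which also obviates any closedness hypothesis on $\Lambda$), convexity of $\Lambda$ gives $u,v\in\Lambda$, hence $Tu,Tv\in T\Lambda$, so $\Theta_\Lambda(Tu,Tv)\leq D$; writing $a:=\alpha(Tu,Tv)$ and $b:=\beta(Tu,Tv)$ this says $0<a\leq b<\infty$ with $b/a\leq e^{D}$. Now translate back. For $0<\lambda<a$ the relation $\lambda Tu\leq_\Lambda Tv$ is $T\bigl((\beta+\lambda\alpha)\phi-(1+\lambda)\psi\bigr)\in\Lambda\cup\{0\}$, i.e. $T\psi\leq_\Lambda\frac{\beta+\lambda\alpha}{1+\lambda}\,T\phi$, so letting $\lambda\uparrow a$ we get $\beta(T\phi,T\psi)\leq\frac{\beta+a\alpha}{1+a}$; symmetrically, for $\mu>b$ the relation $Tv\leq_\Lambda\mu Tu$ gives $\frac{\mu\alpha+\beta}{1+\mu}\,T\phi\leq_\Lambda T\psi$, so $\alpha(T\phi,T\psi)\geq\frac{b\alpha+\beta}{1+b}$. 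Dividing and writing $t:=\beta/\alpha=e^{\Theta_\Lambda(\phi,\psi)}>1$,
$$
e^{\Theta_\Lambda(T\phi,T\psi)}\ \leq\ \frac{(\beta+a\alpha)(1+b)}{(1+a)(b\alpha+\beta)}\ =\ \frac{(t+a)(1+b)}{(1+a)(b+t)},
$$
so it remains to prove $\frac{(t+a)(1+b)}{(1+a)(b+t)}\leq t^{\tanh(D/4)}$ whenever $0<a\leq b\leq ae^{D}$ and $t>1$.

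\textbf{The optimization.} The left side is increasing in $b$ (the $b$-derivative of its logarithm is $\frac{t-1}{(1+b)(b+t)}>0$), so it is enough to treat $b=ae^{D}$; differentiating in $a$ one finds a unique interior maximum at $a=\sqrt t\,e^{-D/2}$, hence $b=\sqrt t\,e^{D/2}$ and $ab=t$. Substituting these values and using $1+\cosh x=2\cosh^{2}(x/2)$, with $\sqrt t=e^{s}$ (so $s=\tfrac12\Theta_\Lambda(\phi,\psi)$) and $p:=D/4$, the inequality becomes
$$
\left(\frac{\cosh(s/2+p)}{\cosh(s/2-p)}\right)^{2}\ \leq\ e^{2s\tanh p},
$$
that is, $\log\cosh(s/2+p)-\log\cosh(s/2-p)\leq s\tanh p$ for $s\geq0$. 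This is elementary: the difference of the two sides vanishes at $s=0$ together with its first derivative, and is strictly concave on $s>0$ because its second derivative is $\tfrac14\bigl(\cosh^{-2}(s/2+p)-\cosh^{-2}(s/2-p)\bigr)<0$ (here $0\leq|s/2-p|<s/2+p$ and $x\mapsto\cosh^{-2}x$ is even and strictly decreasing on $[0,\infty)$). Assembling the three steps and letting $\varepsilon\to0$ proves the theorem.

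\textbf{Main obstacle.} Everything through the scalar inequality is order-theoretic bookkeeping; the real content is the last step, where the \emph{precise} constant $\tanh(D/4)$ — rather than merely some contraction factor $<1$ — only appears after locating the exact critical point $a=\sqrt t\,e^{-D/2}$ and reducing to the hyperbolic identity above. Getting this optimization right, including verifying that the maximum is interior and that it suffices to take $b=ae^{D}$, is where care is required.
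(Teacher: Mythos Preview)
Your proof is correct and is essentially the classical argument for Birkhoff's contraction theorem. Note, however, that the paper does not actually prove this statement: Theorem~A.1 is stated in the appendix as part of a review of standard material on Birkhoff cones (with references to \cite{Baladi} and \cite{Frédéric}), and is then used as a black box in the proof of Lemma~3.1. So there is no ``paper's own proof'' to compare against; what you have supplied is the standard proof that those references contain, carried out carefully --- including the $\varepsilon$-perturbation to put $u,v$ genuinely inside $\Lambda$, the reduction to the scalar inequality via $u=\psi-\alpha\phi$, $v=\beta\phi-\psi$, the monotonicity in $b$, the explicit location of the critical point $a=\sqrt{t}\,e^{-D/2}$ (which one checks by solving $e^{D}(t+a)(1+a)=(1+ae^{D})(ae^{D}+t)$, yielding $a^{2}e^{D}=t$), and the final hyperbolic estimate via concavity of $s\mapsto\log\cosh(s/2+p)-\log\cosh(s/2-p)-s\tanh p$.
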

\begin{lemma}
    Let $\Lambda$ be a closed convex cone in a Banach space $\mathcal{B}$ endowed with two (not necessarily equivalent norms $||\cdot||_i$, $i=1,2,$ and assume that for all $\phi,\psi\in\mathcal{B}$
    $$
    -\phi\leq_\Lambda\psi\leq_\Lambda \phi\implies ||\psi||_i\leq||\phi||_i,\hspace{1mm} i=1,2.
    $$
    Then, for any $\phi,\psi\in\Lambda$ with $||\phi||_1=||\psi||_1$, we have
    $$
    ||\phi-\psi||_2\leq \left(e^{\Theta_\Lambda(\phi,\psi))}-1\right)||\phi||_2.
    $$
\end{lemma}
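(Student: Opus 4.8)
The plan is to work with the two scalars $\alpha:=\alpha(\phi,\psi)$ and $\beta:=\beta(\phi,\psi)$ appearing in the definition of $\Theta_\Lambda$, so that $\Theta_\Lambda(\phi,\psi)=\log(\beta/\alpha)$ and the right-hand side of the asserted inequality is $(\beta/\alpha-1)\|\phi\|_2$. If $\Theta_\Lambda(\phi,\psi)=\infty$ (in particular if $\alpha=0$ or $\beta=\infty$) there is nothing to prove, so I assume $0<\alpha\le\beta<\infty$. First I would observe that the supremum and infimum defining $\alpha$ and $\beta$ are attained: for $\lambda'<\lambda$ one has $\psi-\lambda'\phi=(\psi-\lambda\phi)+(\lambda-\lambda')\phi$, which lies in $\Lambda\cup\{0\}$ whenever $\psi-\lambda\phi$ does (using that a convex cone is closed under addition and positive scaling), so $\{\lambda>0:\lambda\phi\le_\Lambda\psi\}$ is an interval; letting $\lambda\uparrow\alpha$ along it and using that $\Lambda\cup\{0\}$ is closed gives $\alpha\phi\le_\Lambda\psi$, and symmetrically $\psi\le_\Lambda\beta\phi$.

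The crucial step, and the one that uses the normalization $\|\phi\|_1=\|\psi\|_1$, is to show $\alpha\le 1\le\beta$. From $\alpha\phi\le_\Lambda\psi\le_\Lambda\beta\phi$ I would rearrange to obtain the two symmetric sandwiches $-(\beta-\alpha)\phi\le_\Lambda\psi-\alpha\phi\le_\Lambda(\beta-\alpha)\phi$ and $-(\beta-\alpha)\phi\le_\Lambda\beta\phi-\psi\le_\Lambda(\beta-\alpha)\phi$; e.g. for the first, the upper inequality is $(\beta-\alpha)\phi-(\psi-\alpha\phi)=\beta\phi-\psi\in\Lambda\cup\{0\}$ and the lower one is $(\psi-\alpha\phi)+(\beta-\alpha)\phi\in\Lambda\cup\{0\}$, a sum of two elements of $\Lambda\cup\{0\}$. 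Applying the monotonicity hypothesis with $i=1$ yields $\|\psi-\alpha\phi\|_1\le(\beta-\alpha)\|\phi\|_1$ and $\|\beta\phi-\psi\|_1\le(\beta-\alpha)\|\phi\|_1$. Combining with the triangle inequality and $\|\psi\|_1=\|\phi\|_1$: from the first, $\|\phi\|_1=\|\psi\|_1\le\|\psi-\alpha\phi\|_1+\alpha\|\phi\|_1\le\beta\|\phi\|_1$, hence $\beta\ge 1$ (note $\|\phi\|_1>0$ since $\phi\in\Lambda$); from the second, $\beta\|\phi\|_1\le\|\beta\phi-\psi\|_1+\|\psi\|_1\le(\beta-\alpha+1)\|\phi\|_1$, hence $\alpha\le 1$.

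With $\alpha\le 1\le\beta$ established, I would compare $\psi$ directly with $\phi$: from $\alpha\phi\le_\Lambda\psi\le_\Lambda\beta\phi$ one gets $-(1-\alpha)\phi\le_\Lambda\psi-\phi\le_\Lambda(\beta-1)\phi$, and hence $-\eta\le_\Lambda\psi-\phi\le_\Lambda\eta$ where $\eta:=\max(1-\alpha,\beta-1)\,\phi\in\Lambda\cup\{0\}$ (again by adding suitable elements of $\Lambda\cup\{0\}$). The monotonicity hypothesis with $i=2$ then gives $\|\phi-\psi\|_2\le\|\eta\|_2=\max(1-\alpha,\beta-1)\|\phi\|_2$. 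Finally I would check the elementary scalar inequality $\max(1-\alpha,\beta-1)\le\beta/\alpha-1$ for $0<\alpha\le 1\le\beta$: the bound $\beta-1\le\beta/\alpha-1$ is just $\alpha\le 1$, while $1-\alpha\le\beta/\alpha-1$ rearranges to $2\alpha-\alpha^2\le\beta$, which holds since $2\alpha-\alpha^2=1-(1-\alpha)^2\le 1\le\beta$. Therefore $\|\phi-\psi\|_2\le(\beta/\alpha-1)\|\phi\|_2=(e^{\Theta_\Lambda(\phi,\psi)}-1)\|\phi\|_2$, as claimed.

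The only genuinely delicate point is the middle step: without the normalization $\|\phi\|_1=\|\psi\|_1$ one would only be able to conclude a bound of the form $C(\beta-\alpha)\|\phi\|_2$, which is worthless (take $\psi$ a large scalar multiple of $\phi$, where $\Theta_\Lambda(\phi,\psi)=0$). Beyond that the argument is routine bookkeeping with the cone order, where the one recurring care point is to pass from a one-sided relation $\zeta\in\Lambda\cup\{0\}$ to a symmetric sandwich $-\eta\le_\Lambda\zeta\le_\Lambda\eta$ before invoking the two-sided monotonicity hypothesis, which is always possible by adding an appropriate positive multiple of $\phi$ and using convexity of $\Lambda$.
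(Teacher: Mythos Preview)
The paper does not supply a proof of this lemma; it is stated as a standard fact from the Birkhoff cone literature (it appears, for instance, in Liverani's work and in Baladi's book, which the paper cites). Your argument is correct and is essentially the classical one: first pin down $\alpha\phi\le_\Lambda\psi\le_\Lambda\beta\phi$ by closedness of the cone, then use the normalization $\|\phi\|_1=\|\psi\|_1$ together with the monotonicity hypothesis for $\|\cdot\|_1$ to force $\alpha\le 1\le\beta$, then sandwich $\psi-\phi$ symmetrically between $\pm\max(1-\alpha,\beta-1)\phi$ and apply the monotonicity hypothesis for $\|\cdot\|_2$, finishing with the elementary bound $\max(1-\alpha,\beta-1)\le\beta/\alpha-1$. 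All the order-theoretic manipulations you perform (adding elements of $\Lambda\cup\{0\}$ to pass between one-sided and two-sided inequalities) are valid uses of convexity, and your justification that $\|\phi\|_1>0$ because $0\notin\Lambda$ is the right way to divide through.
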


If $\Sigma^+$ is the one-sided full shift on $k$-symbols, and if $\mathcal{F}_\theta^+$ is the Banach space of Lipschitz continuous functions on $\Sigma^+$ with respect to the $d_\theta$-metric, then given any $L>0$, we have a cone in $\mathcal{F}_\theta^+$ given by
$$
\mathcal{C}_L=\{\phi\in\mathcal{F}_\theta^+ | \phi\geq 0, \phi\not\equiv 0, d_\theta(x,y)\leq\theta \implies \phi(x)\leq e^{Ld_\theta(x,y)}\phi(y)\}.
$$
In order to apply Birkhoff's inequality, we will need the following lemmas:
\begin{lemma}
    Fix $0<\xi<1$. Then for every $\phi,\psi\in\mathcal{C}_{\xi L}$ with $\phi,\psi>0$, we have 
    $$
    \Theta_{L}(\phi,\psi)\leq 2\log\left(\frac{1+\xi}{1-\xi}\right)+\log\sup_{x,y\in\Sigma^+}\left(\frac{\phi(x)\psi(y)}{\phi(y)\psi(x)}\right)
    $$
\end{lemma}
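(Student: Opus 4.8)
The plan is to read off $\Theta_L(\phi,\psi)=\log\big(\beta_L/\alpha_L\big)$ directly from the definitions and to bound $\alpha_L:=\alpha(\phi,\psi)$ from below and $\beta_L:=\beta(\phi,\psi)$ from above, all with respect to the cone $\mathcal{C}_L$ (note $\mathcal{C}_{\xi L}\subset\mathcal{C}_L$ since $\xi<1$, so $\Theta_L$ is defined here). Set $a=\inf_{z}\psi(z)/\phi(z)$ and $b=\sup_z\psi(z)/\phi(z)$; since $\phi,\psi$ are continuous and strictly positive on the compact space $\Sigma^+$ one has $0<a\le b<\infty$ (this is where strict positivity is used), and with $u=\psi/\phi$ one checks immediately that $\sup_{x,y}\frac{\phi(x)\psi(y)}{\phi(y)\psi(x)}=\frac{\sup u}{\inf u}=b/a$. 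Thus it suffices to prove the two estimates $\alpha_L\ge\frac{1-\xi}{1+\xi}\,a$ and $\beta_L\le\frac{1+\xi}{1-\xi}\,b$, since then $\beta_L/\alpha_L\le\big(\frac{1+\xi}{1-\xi}\big)^2 b/a$ and taking logarithms gives the claim.

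First I would unwind membership in $\mathcal{C}_L$. Writing $t=d_\theta(x,y)$, the statement $\psi-\lambda\phi\in\mathcal{C}_L\cup\{0\}$ amounts to $\lambda\le a$ (nonnegativity) together with $\psi(x)-\lambda\phi(x)\le e^{Lt}\big(\psi(y)-\lambda\phi(y)\big)$ for every pair with $d_\theta(x,y)\le\theta$. Because $\phi\in\mathcal{C}_{\xi L}$ gives $\phi(x)\le e^{\xi Lt}\phi(y)<e^{Lt}\phi(y)$ when $t>0$, the coefficient $e^{Lt}\phi(y)-\phi(x)$ is strictly positive, so this last condition is equivalent to $\lambda\le r(x,y)$, where
$$r(x,y):=\frac{e^{Lt}\psi(y)-\psi(x)}{e^{Lt}\phi(y)-\phi(x)}.$$
Hence $\alpha_L=\min\big(a,\ \inf_{d_\theta(x,y)\le\theta} r(x,y)\big)$, and applying the same computation to $\lambda\phi-\psi$ gives $\beta_L=\max\big(b,\ \sup_{d_\theta(x,y)\le\theta} r(x,y)\big)$. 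Here I would pause to verify that for $\lambda<\frac{1-\xi}{1+\xi}a\le a$ the function $\psi-\lambda\phi=\phi\cdot(u-\lambda)$ is strictly positive, hence genuinely lies in $\mathcal{C}_L$ and is not identically zero (and symmetrically for $\beta_L$), so these formulas really control the quantities in Definition A.2.

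The heart of the matter is a uniform bound on $r(x,y)$, and the key trick is to estimate numerator and denominator separately using the two one-sided cone bounds $e^{-\xi Lt}\psi(y)\le\psi(x)\le e^{\xi Lt}\psi(y)$ (and the same for $\phi$; the lower bounds come from applying the cone condition with $x$ and $y$ interchanged), choosing them so that the factor $e^{Lt}$ cancels. This yields
$$r(x,y)\ \ge\ \frac{\psi(y)}{\phi(y)}\cdot\frac{e^{Lt}-e^{\xi Lt}}{e^{Lt}-e^{-\xi Lt}}\ =\ \frac{\psi(y)}{\phi(y)}\,g(Lt),\qquad r(x,y)\ \le\ \frac{\psi(y)}{\phi(y)}\,g(Lt)^{-1},$$
where $g(m)=\frac{e^{m}-e^{\xi m}}{e^{m}-e^{-\xi m}}$. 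It then remains to prove the elementary inequality $g(m)\ge\frac{1-\xi}{1+\xi}$ for all $m>0$: clearing denominators, this reduces to $\xi\big(\cosh m-\cosh(\xi m)\big)+\big(\xi\sinh m-\sinh(\xi m)\big)\ge0$, where the first bracket is $\ge 0$ by monotonicity of $\cosh$ on $[0,\infty)$ and the second by convexity of $\sinh$ (so $\sinh(\xi m)\le\xi\sinh m$). Since $a\le\psi(y)/\phi(y)\le b$, we conclude $r(x,y)\ge\frac{1-\xi}{1+\xi}a$ and $r(x,y)\le\frac{1+\xi}{1-\xi}b$ for every admissible pair, hence $\alpha_L\ge\frac{1-\xi}{1+\xi}a$ and $\beta_L\le\frac{1+\xi}{1-\xi}b$, as needed.

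The main obstacle is precisely this pair-ratio estimate. The naive route — bounding $\psi(x)/\psi(y)\le e^{2\xi Lt}$ and substituting — produces a factor of the form $\frac{e^{2\xi Lt}-1}{e^{(1-\xi)Lt}-1}$, which is \emph{not} controlled by any expression in $\xi$ alone (it grows with $Lt$), so the resulting bound would depend on $L$ and $\theta$, contradicting the statement. The fix is to pair $e^{Lt}\psi(y)$ against the upper bound $\psi(x)\le e^{\xi Lt}\psi(y)$ in the numerator and $e^{Lt}\phi(y)$ against the lower bound $\phi(x)\ge e^{-\xi Lt}\phi(y)$ in the denominator (and symmetrically for the upper estimate on $r$), so that the common $e^{Lt}$ factors out and leaves the clean, $L$-independent quantity $g(Lt)$, whose infimum over $m>0$ is exactly $\frac{1-\xi}{1+\xi}$; everything else is bookkeeping.
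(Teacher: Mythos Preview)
Your argument is correct and is the standard computation for the Hilbert metric on this cone: expressing $\alpha_L$ and $\beta_L$ as extrema of the pointwise ratio $a,b$ and the pair ratio $r(x,y)$, and then using the $\mathcal{C}_{\xi L}$ bounds in opposite directions on numerator and denominator to obtain the $L$-independent factor $g(Lt)\ge\frac{1-\xi}{1+\xi}$. The paper does not give its own proof here---it simply refers to Naud's Proposition~5.3---and your write-up is essentially the argument one finds there (and in Liverani/Baladi), so there is nothing to compare.
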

See \cite{Frédéric} Proposition 5.3 for the proof, which is unchanged in our setting.
\begin{lemma}
    Fix $\theta<\xi<1$. Then for every $L\geq\frac{\theta|\psi|_\theta}{\xi-\theta}$, we have $\mathcal{L}_\psi(\mathcal{C}_L)\subset\mathcal{C}_{\xi L}$, and we have 
    $$
    \diam_{\Theta_L}(\mathcal{L}_\psi(\mathcal{C}_L))\leq 2\log\left(\frac{1+\xi}{1-\xi}\right)+2\xi L.
    $$
\end{lemma}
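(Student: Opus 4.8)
The plan is to derive both assertions from a single uniform distortion inequality for $\mathcal{L}_\psi\phi$ that holds for \emph{all} pairs of points of $\Sigma^+$, not only for $\theta$-close ones; obtaining this global inequality is precisely where the full-shift hypothesis is used. I would write the inverse branches of $\sigma$ as $x\mapsto ix=(i,x_0,x_1,\dots)$ for $i=1,\dots,k$, which, for the full shift, are available over \emph{every} point of $\Sigma^+$. The first step is the trivial metric identity $d_\theta(ix,iy)=\theta\,d_\theta(x,y)\le\theta$, valid for all $x,y$ and every symbol $i$: prepending a common symbol forces agreement in the first coordinate, so $ix$ and $iy$ are always close enough to be inserted into the defining inequality of $\mathcal{C}_L$, no matter how far apart $x$ and $y$ are.

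The core step is then the following estimate. Fix $\phi\in\mathcal{C}_L$. For each branch $i$, the cone inequality gives $\phi(ix)\le e^{L d_\theta(ix,iy)}\phi(iy)=e^{L\theta d_\theta(x,y)}\phi(iy)$, while Lipschitz continuity of $\psi$ gives $|\psi(ix)-\psi(iy)|\le|\psi|_\theta d_\theta(ix,iy)=|\psi|_\theta\theta d_\theta(x,y)$. Multiplying these for each $i$ and summing over the branches yields
$$
\mathcal{L}_\psi\phi(x)\le e^{(|\psi|_\theta+L)\theta\,d_\theta(x,y)}\,\mathcal{L}_\psi\phi(y)\qquad\text{for all }x,y\in\Sigma^+ .
$$
The hypothesis $L\ge\frac{\theta|\psi|_\theta}{\xi-\theta}$ is exactly what makes $(|\psi|_\theta+L)\theta\le\xi L$. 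Restricting the displayed inequality to pairs with $d_\theta(x,y)\le\theta$ then shows that $\mathcal{L}_\psi\phi$ obeys the cone inequality with constant $\xi L$; combined with $\mathcal{L}_\psi\phi\ge 0$ and with strict positivity of $\mathcal{L}_\psi\phi$ (if $\phi(z)>0$, then prepending $z_0$ gives $\phi(z_0x)\ge e^{-L\theta}\phi(z)>0$, hence $\mathcal{L}_\psi\phi(x)\ge e^{\psi(z_0x)}\phi(z_0x)>0$ for every $x$), this gives $\mathcal{L}_\psi(\mathcal{C}_L)\subset\mathcal{C}_{\xi L}$.

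For the diameter bound I would take $u=\mathcal{L}_\psi\phi$ and $v=\mathcal{L}_\psi\phi'$ with $\phi,\phi'\in\mathcal{C}_L$. By the previous step $u,v\in\mathcal{C}_{\xi L}$ and $u,v>0$, so the preceding lemma (the $\Theta_L$-estimate) applies and gives
$$
\Theta_L(u,v)\le 2\log\!\left(\frac{1+\xi}{1-\xi}\right)+\log\sup_{x,y\in\Sigma^+}\frac{u(x)\,v(y)}{u(y)\,v(x)} .
$$
Applying the core inequality to $\phi$ (to control $u(x)/u(y)$) and to $\phi'$ (to control $v(y)/v(x)$) bounds the argument of the last logarithm by $e^{2(|\psi|_\theta+L)\theta}\le e^{2\xi L}$, whence $\Theta_L(u,v)\le 2\log\!\left(\frac{1+\xi}{1-\xi}\right)+2\xi L$; taking the supremum over $u,v\in\mathcal{L}_\psi(\mathcal{C}_L)$ yields the stated bound on $\diam_{\Theta_L}(\mathcal{L}_\psi(\mathcal{C}_L))$.

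The hard part — and the reason this is worth isolating as a lemma — is the passage from the purely local cone inequality to the \emph{global} distortion bound on $\mathcal{L}_\psi\phi(x)/\mathcal{L}_\psi\phi(y)$: the cone condition by itself says nothing about far-apart points, and the bound survives only because, for the full shift, the same family of inverse branches sits over every base point, so prepending a single symbol already produces first-coordinate agreement. For a general irreducible aperiodic subshift one must instead match inverse branches over different base points (inserting a fixed connecting word, using aperiodicity), which is the extra bookkeeping alluded to in the main text and which here disappears.
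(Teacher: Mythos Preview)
Your proof is correct and follows essentially the same route as the paper's: the same branchwise distortion estimate $\mathcal{L}_\psi\phi(x)\le e^{\theta(|\psi|_\theta+L)d_\theta(x,y)}\mathcal{L}_\psi\phi(y)$, the same reduction of $(|\psi|_\theta+L)\theta\le\xi L$ to the hypothesis on $L$, the same positivity argument via a cylinder where $\phi>0$, and the same application of the preceding lemma to bound the diameter. If anything you are more explicit than the paper about why the distortion estimate holds \emph{globally} (because $d_\theta(ix,iy)\le\theta$ for all $x,y$ on the full shift), which is exactly what is needed when taking the supremum over all $x,y$ in the diameter bound.
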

\begin{proof}
    Let $\phi\in\mathcal{C}_L$, and let $x,y\in\Sigma^+$ be such that $d_\theta(x,y)\leq\theta$. We obtain
    $$
    \mathcal{L}_\psi\phi(x)=\sum_{i=1}^ke^{\psi(ix)}\phi(ix)\leq e^{\theta(|\psi|_\theta+L)d_\theta(x,y)}\sum_{i=1}^ke^{\psi(iy)}\phi(iy)
    =e^{\theta(|\psi|_\theta+L)d_\theta(x,y)}\mathcal{L}_\psi\phi(y).
    $$
    The condition $\theta|\psi|_\theta+L\theta\leq \xi L$ holds if and only if $L\geq\frac{\theta|\psi|_\theta}{\xi-\theta}$.
    Notice that $\phi\in\mathcal{C}_L$ implies that there exists at least one cylinder set $C_i=\{x\in\Sigma^+ | x_0=i\}$ such that $\phi|_{C_i}>0$. Thus
    $\mathcal{L}_\psi\phi(x)\geq e^{\psi(ix)}\phi(ix)>0$. We may therefore apply Lemma A.2 to functions $\mathcal{L}_\psi\phi_1$ and $\mathcal{L}_\psi\phi_2$:
    $$\Theta_L(\mathcal{L}_\psi\phi_1,\mathcal{L}_\psi\phi_2\leq 2\log\left(\frac{1+\xi}{1-\xi}\right) +
    \log\sup_{x,y\in\Sigma^+}\left(\frac{\mathcal{L}_\psi\phi_1(x)\mathcal{L}_\psi\phi_2(y)}{\mathcal{L}_\psi\phi_1(y)\mathcal{L}_\psi\phi_2(x)} \right)\leq$$
    $$
    2\log\left(\frac{1+\xi}{1-\xi}\right)+\log(e^{2\xi L})=
    2\log\left(\frac{1+\xi}{1-\xi}\right)+2\xi L.
    $$
\end{proof}

\noindent For $\phi\in\mathcal{F}^+$, define the seminorm
$$V(\phi):=\sup_{d_\theta(x,y)\leq\theta, x\neq y}\frac{|\phi(x)-\phi(y)|}{d_\theta(x,y)},$$ and set
$||\phi||_L:=\max (||\phi||_\infty, \frac{1}{2L}V(\phi))$.
The next lemma gives the essential properties of the norm $||\cdot||_L$:
\begin{lemma}

    The norm $||\cdot||_L$ is equivalent to $||\cdot||_\theta$, and for all $\phi,\psi\in\mathcal{F}^+$ we have that $-\phi_2\leq_{\mathcal{C}_L}\phi_1\leq_{\mathcal{C}_L} \phi_2$ implies that
    $||\phi_1||_L\leq ||\phi_2||_L$.
    
\end{lemma}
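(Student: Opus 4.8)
The plan is to prove the two assertions separately. For the norm equivalence, I would first observe that $V(\phi) \leq |\phi|_\theta$ trivially, since $V$ is a supremum over a smaller set of pairs (those with $d_\theta(x,y)\leq\theta$), so $\|\phi\|_L \leq \max(\|\phi\|_\infty, \tfrac{1}{2L}|\phi|_\theta) \leq \max(1,\tfrac{1}{2L})\|\phi\|_\theta$. The reverse inequality is the substantive half: I must bound $|\phi|_\theta$ in terms of $V(\phi)$ and $\|\phi\|_\infty$. The point is that if $d_\theta(x,y) > \theta$, i.e.\ $x_0 \neq y_0$, then $d_\theta(x,y) = 1$ and $|\phi(x)-\phi(y)| \leq 2\|\phi\|_\infty = 2\|\phi\|_\infty \, d_\theta(x,y)$; while if $d_\theta(x,y)\leq\theta$ the ratio is controlled by $V(\phi)$. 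Hence $|\phi|_\theta \leq \max(V(\phi), 2\|\phi\|_\infty) \leq 2L\|\phi\|_L + 2\|\phi\|_L$, giving $\|\phi\|_\theta \leq (3+2L)\|\phi\|_L$ or similar. Combining the two bounds shows the norms are equivalent (with constants depending on $L$ and $\theta$, which is all that is needed).

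For the order property, suppose $-\phi_2 \leq_{\mathcal{C}_L} \phi_1 \leq_{\mathcal{C}_L} \phi_2$, meaning $\phi_2 - \phi_1 \in \mathcal{C}_L \cup\{0\}$ and $\phi_2 + \phi_1 \in \mathcal{C}_L\cup\{0\}$. Since elements of $\mathcal{C}_L$ are nonnegative, adding these gives $2\phi_2 \geq 0$, and subtracting gives $|\phi_1| \leq \phi_2$ pointwise, hence $\|\phi_1\|_\infty \leq \|\phi_2\|_\infty$. It remains to show $V(\phi_1) \leq V(\phi_2)$. Fix $x \neq y$ with $d_\theta(x,y) \leq \theta$. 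Writing $g_+ = \phi_2+\phi_1$ and $g_- = \phi_2 - \phi_1$, the cone condition gives $g_\pm(x) \leq e^{L d_\theta(x,y)} g_\pm(y)$ and $g_\pm(y) \leq e^{L d_\theta(x,y)} g_\pm(x)$, so $|g_\pm(x) - g_\pm(y)| \leq (e^{L d_\theta(x,y)} - 1)\min(g_\pm(x),g_\pm(y)) \leq (e^{L d_\theta(x,y)}-1)\phi_2(x)$ (using $g_\pm \leq 2\phi_2$, with a factor I will track carefully). Then $2|\phi_1(x) - \phi_1(y)| = |g_+(x) - g_+(y) - (g_-(x) - g_-(y))| \leq |g_+(x)-g_+(y)| + |g_-(x)-g_-(y)|$, and I bound each term using the cone inequality for $\phi_2$ itself, namely $\phi_2(x) - \phi_2(y) \geq -(e^{Ld_\theta(x,y)}-1)\phi_2(y)$ and the analogous upper bound, to convert back into $V(\phi_2)$.

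The main obstacle is the last step: getting the correct constant so that the bound reads exactly $V(\phi_1) \leq V(\phi_2)$ (or $\tfrac{1}{2L}V(\phi_1) \leq \tfrac{1}{2L}V(\phi_2)$) rather than with an extra multiplicative factor. This is precisely why the seminorm $V$ is restricted to pairs with $d_\theta(x,y)\leq\theta$ and why the normalization $\tfrac{1}{2L}V$ is chosen — the factor of $2$ absorbs the sum $g_+ + g_- = 2\phi_2$, and one uses $1 - e^{-Ld_\theta(x,y)} \leq Ld_\theta(x,y)$ together with $e^{Ld_\theta(x,y)} - 1$ estimates so the bookkeeping closes. I expect the cleanest route is to not pass through $g_\pm$ at all but to argue directly: from $\phi_2 \pm \phi_1 \geq 0$ and the log-Lipschitz cone bounds on $\phi_2\pm\phi_1$, deduce the oscillation of $\phi_1$ on the pair $(x,y)$ is dominated by that of $\phi_2$, using that $\phi_1 = \tfrac12(g_+ - g_-)$ and that both $g_+$ and $g_-$ satisfy the same one-sided distortion bounds as $\phi_2$ does. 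This mirrors \cite{Frédéric} and the expanding-map case in \cite{Baladi}, and no genuinely new idea beyond careful constant-tracking is required.
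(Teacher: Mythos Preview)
Your norm-equivalence argument is correct and matches the paper's. The problem is in the order property: you set yourself the target $V(\phi_1)\le V(\phi_2)$, but this inequality is simply false. Take $\phi_2\equiv 1$, so $V(\phi_2)=0$, and let $\phi_1$ depend only on the coordinate $x_1$ with $\|\phi_1\|_\infty=c$ small enough that $(1+c)/(1-c)\le e^{L\theta}$. Then $1\pm\phi_1\in\mathcal{C}_L$, so $-\phi_2\le_{\mathcal{C}_L}\phi_1\le_{\mathcal{C}_L}\phi_2$, yet $V(\phi_1)>0$. The ``correct constant'' you are hunting for in the last paragraph does not exist, and any route that tries to dominate the oscillation of $\phi_1$ by the oscillation of $\phi_2$ is doomed.

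The paper's point is that you do not need $V(\phi_1)\le V(\phi_2)$: it suffices to show
\[
\tfrac{1}{2L}V(\phi_1)\le \|\phi_2\|_\infty,
\]
since then both components of $\|\phi_1\|_L=\max\bigl(\|\phi_1\|_\infty,\tfrac{1}{2L}V(\phi_1)\bigr)$ are bounded by $\|\phi_2\|_\infty\le\|\phi_2\|_L$. This is precisely why the norm carries the factor $\tfrac{1}{2L}$. The tool is the estimate $V(g)\le L\|g\|_\infty$ for every $g\in\mathcal{C}_L$: adding $\epsilon>0$ to avoid zeros, the cone condition gives $|\log(g(x)+\epsilon)-\log(g(y)+\epsilon)|\le L d_\theta(x,y)$, and the mean value theorem for $\exp$ yields $|g(x)-g(y)|\le(\|g\|_\infty+\epsilon)\,L\,d_\theta(x,y)$. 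Applying this to $g_\pm=\phi_2\pm\phi_1\in\mathcal{C}_L$ and using $\|g_\pm\|_\infty\le 2\|\phi_2\|_\infty$ gives
\[
V(\phi_1)\le \tfrac12\bigl(V(g_+)+V(g_-)\bigr)\le \tfrac{L}{2}\bigl(\|g_+\|_\infty+\|g_-\|_\infty\bigr)\le 2L\|\phi_2\|_\infty.
\]
Your $(e^{Ld_\theta}-1)$ bound is heading in a similar direction but loses a factor of $e^{L\theta}$ and, more importantly, is aimed at the wrong endpoint: the variation of $g_\pm$ must be compared to $\|\phi_2\|_\infty$, not to $V(\phi_2)$.
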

\begin{proof} 
    Given $\phi\in\mathcal{C}_L$, $\epsilon>0,$ and $x,y\in\Sigma^+$ such that $d_\theta(x,y)\leq \theta$, we have
    $$
    |\phi(x)-\phi(y)|=|e^{(\phi(x)+\epsilon)}-e^{(\phi(y)+\epsilon)}|\leq (||\phi||_\infty+\epsilon)\left|\frac{\phi(x)+\epsilon}{\phi(y)+\epsilon}\right|\leq (||\phi||_\infty+\epsilon)Ld_\theta(x,y).
    $$
    Letting $\epsilon\rightarrow 0$ and taking the supremum, we get the estimate $V(\phi)\leq ||\phi||_\infty$. This gives us that $||\phi||_L\leq C||\phi||\theta$ for some $C>0$. Likewise, it is easy to see that $|\phi|_\theta\leq 2||\phi||_L$, so we have that the norms are equivalent.

    Now suppose that $-\phi_2\leq_L\phi_1\leq_L\phi_2$. Then $\phi_2-\phi_1\geq 0$ and $\phi_2+\phi_1\geq 0.$ In other words, for every $x\in\Sigma^+$, $-\phi_2(x)\leq\phi_1(x)\leq\phi_2(x)$, which implies that
    $||\phi_1||_\infty\leq ||\phi_2||_\infty$. To prove that $||\phi_1||_L\leq||\phi_2||_L$, it suffices to prove that $V(\phi_1)\leq 2L||\phi_2||_\infty$. We have
    $$
    V(\phi_1)=V(\frac{\phi_1-\phi_2}{2}-\frac{\phi_1+\phi_2}{2})\leq \frac{1}{2}(V(\phi_2-\phi_1)+V(\phi_2+\phi_1))\leq $$
    $$
    \frac{L}{2}(||\phi_2-\phi_1||_\infty+||\phi_2+\phi_1||_\infty)\leq 2L||\phi_2||_\infty.
    $$
    
\end{proof}
\noindent Observe that for $\phi\in\mathcal{F}^+,$ $\phi\geq 0$ and $\alpha=\frac{|\phi|_\theta}{L>0}$, we have for all $x,y\in\Sigma^+$
    $$
    \frac{\phi(x)+\alpha}{\phi(y)+\alpha}=e^{\log(\phi(x)+\alpha)-\log(\phi(y)+\alpha)}\leq e^{\frac{|\phi|_\theta}{\alpha}d_\theta(x,y)}=e^{Ld_\theta(x,y)}.
    $$
    Hence $\phi+\alpha\in\mathcal{C}_L$.

\begin{proof}[Proof of Lemma 3.1]

Note that for every $f\in\mathcal{W}$, if we take $\theta>\frac{1}{\gamma}$, we have $|\pi_f|_\theta\leq 1$. Thus for uniformity in Lemma 3.1, we need to take 
$$
L\geq \frac{\theta M}{\xi-\theta},
$$
where $M:=\max\{|\log(f')|_{C^1}\hspace{1mm} |\hspace{1mm} f\in\mathcal{W}\}$. Let $h_f$ be such that $\mathcal{L}_{\psi_f}h_f=e^{P(\psi_f)}h_f$, and let $\nu_f$ be the corresponding measure such that $\mathcal{L}_{\psi_f}^*\nu_f= e^{P(\psi_f)}\nu_f.$ Note that for all $f\in\mathcal{W}$, $P(\psi_f)=0$
It can be shown that $h_f\in\mathcal{C}_L$ for $L$ taken as above. Hence for any $x,y\in\Sigma^+$ we have
$$
h_f(x)=\mathcal{L}_{\psi_f}h_f(x)=\sum_{i=1}^ke^{\psi_f(ix)}h_f(ix)\leq e^{\theta(L+|\psi_f|_\theta)d_\theta(x,y)}\sum_{i=1}^ke^{\psi_f(iy)}h_f(iy)\leq e^{\theta(L+M)d_\theta(x,y)}h_f(y),
$$
and hence $|\log(h_f)|_\theta\leq \theta(L+M)$, and so 
$|\overline{\psi}_f|_\theta\leq M+\theta(L+M)+L+M=(2+\theta)M+(1+\theta)L$. Thus for the normalized operators $\mathcal{L}_{\overline{\psi}_f}$ we take 
$$
L_0\geq \frac{\theta((2+\theta)M+(1+\theta)L)}{\xi-\theta}.
$$
Moreover, one can show that 
$-\phi\leq_{\mathcal{C}_L}\psi\leq_{\mathcal{C}_L} \phi $
implies that 
$$
\int\psi d\mu_f\leq \int\phi d\mu_f,
$$
where $\mu_f$ is the equilibrium state corresponding to $\overline{\psi}_f$.

Observe that for every $n\in\mathbb{N}$,
$$
\int\mathcal{L}_{\overline{\psi}_f}\phi d\mu_f=\int\phi d\mu_f.
$$
Therefore we may apply Lemma A.1 with $||\cdot||_1=||\cdot||_{L^1}$, $||\cdot||_2=||\cdot||_L$, $\phi=\mathcal{L}^n_{\overline{\psi}_f}\phi$, and $\psi=\int\phi d\mu_f=\mathcal{L}^n_{\overline{\psi}_f}(\int\phi d\mu_f)$:
$$
||\mathcal{L}^n_{\overline{\psi}_f}\phi-\int\phi d\mu_f||_L\leq \left(e^{\Theta_L(\mathcal{L}^n_{\overline{\psi}_f}\phi,\mathcal{L}^n_{\overline{\psi}_f}(\int\phi d\mu_f))}-1\right)||\int\phi d\mu_f||_L\leq
\left(e^{\Theta_L(\mathcal{L}^n_{\overline{\psi}_f}\phi,\mathcal{L}^n_{\overline{\psi}_f}(\int\phi d\mu_f))}-1\right)||\phi||_L
$$
Let $\Delta=\diam_{\Theta_L}(\mathcal{L}_{\overline{\psi}_f}(\mathcal{C}_L))$, and observe that Birkhoff's inequality implies that for $\phi\in\mathcal{C}_L$
$$
\Theta_L(\mathcal{L}^n_{\overline{\psi}_f}\phi,\mathcal{L}^n_{\overline{\psi}_f}(\int\phi d\mu_f))\leq
\left(\tanh(\frac{\Delta}{4})\right)^{n-1}\Delta\leq 
\Delta\tau_{\mathcal{W}}^{n-1},
$$
for uniform $\tau_{\mathcal{W}}$.Therefore for $\phi\in\mathcal{C}_L$, we have
$$
||\mathcal{L}^n_{\overline{\psi}_f}\phi-\int\phi d\mu_f||_L\leq
\left(\sum_{j=1}^\infty\frac{(\Delta\tau_{\mathcal{W}}^{n-1})^j}{j!}\right)||\phi||_L=C_{\mathcal{W}}\tau_{\mathcal{W}}^n||\phi||_L,
$$
where $C_{\mathcal{W}}$ is uniform. Since the norm's are equivalent, we may replace $||\cdot||_L$ by $||\cdot||_\theta$. It remains to extend this bound to all $\phi\in\mathcal{F}_\theta^+$. If $\phi\geq 0$, then $\phi+\frac{|\phi|_\theta}{L}\in\mathcal{C}_L$, so 
$$
||\mathcal{L}^n_{\overline{\psi}_f}\phi-\int\phi d\mu_f||_\theta=||\mathcal{L}^n_{\overline{\psi}_f}(\phi+\frac{|\phi|_\theta}{L})-\int(\phi+\frac{|\phi|_\theta}{L}) d\mu_f||_\theta\leq C_{\mathcal{W}}\tau_{\mathcal{W}}^n(||\phi||_\theta+\frac{|\phi|_\theta}{L})\leq 
C_{\mathcal{W}}\tau_{\mathcal{W}}^n||\phi||_\theta
$$
for a different (but still uniform) $C_{\mathcal{W}}$.
For general real-valued $\phi\in\mathcal{F}_\theta^+$, we decompose $\phi$ as $\phi=\phi^+-\phi^-$, where $\phi^+,\phi^-\geq 0$. Then
$$
||\mathcal{L}^n_{\overline{\psi}_f}\phi-\int\phi d\mu_f||_\theta\leq
||\mathcal{L}^n_{\overline{\psi}_f}\phi^+-\int\phi^+ d\mu_f||_\theta+||\mathcal{L}^n_{\overline{\psi}_f}\phi^-\int\phi^- d\mu_f||_\theta\leq
$$
$$
C_{\mathcal{W}}\tau_{\mathcal{W}}^n(||\phi^+||_\theta+||\phi^-||_\theta)\leq
2C_{\mathcal{W}}\tau_{\mathcal{W}}^n||\phi||_\theta.
$$
The case of complex-valued $\phi$ is handled similarly.
\end{proof}
\printbibliography

\end{document}